\numberwithin{equation}{section}
\newcommand{\f}{\frac}
\newcommand{\R}{\mathbb R}
\newcommand{\C}{\mathbb C}
\newcommand{\A}{\mathcal A}
\newtheorem{theorem}{Theorem}[section]
\newtheorem{thm}{Theorem}[section]
\newtheorem{lemma}[theorem]{Lemma}
\newtheorem{definition}[theorem]{Definition}
\newtheorem{algorithm}[thm]{Algorithm}
\newtheorem{prop}[theorem]{Proposition}
\newtheorem{remark}[theorem]{Remark}
\DeclareMathOperator{\re}{Re}
\title{Accelerating the Computation of Tensor $Z$-eigenvalues}
\author{
\and
Sara Pollock
\thanks{Department of Mathematics, University of Florida, Gainesville, FL
  32611-8105 (s.pollock@ufl.edu)}
\and
Rhea Shroff
\thanks{Department of Mathematics, University of Florida, Gainesville, FL
  32611-8105 (rhea.shroff@ufl.edu)}
}
\date{\today}
\begin{document}
\maketitle
\begin{abstract}
Efficient solvers for tensor eigenvalue problems are important tools for the
analysis of higher-order data sets.
Here we introduce, analyze and demonstrate an extrapolation method to accelerate the
widely used shifted symmetric higher order power method for tensor
$Z$-eigenvalue problems.
We analyze the asymptotic convergence of the method, determining the range of
extrapolation parameters that induce acceleration, as well as the parameter
that gives the optimal convergence rate.  We then introduce an automated
method to dynamically approximate the optimal parameter, and demonstrate 
it's efficiency when the base iteration is run with either static or 
adaptively set shifts. Our numerical results on both even and odd order 
tensors demonstrate the theory and show we achieve our theoretically 
predicted acceleration.
\end{abstract}
\section{Introduction}\label{sec: intro}

Tensor analysis has been gaining attention across mathematics and physical and data 
sciences due to the need to analyze and draw inferences from growing numbers of
higher order data sets.
Tensors are algebraic objects that define a multi-linear relationship between sets of 
algebraic objects related to a vector space, and they arise naturally in the analysis 
of data-intensive problems. 
Applications of higher-order or tensor eigenvalue problems can be seen in diverse
applications including
diffusion tensor imaging \cite{GoDe09,QWW08,SFGDFL13,Schultz_Seidel_2008,SMHKN03};
data analysis and mixture models arising in applied statistics and machine learning 
\cite{AGHKT14,SeKa12};
quantum physics and quantum geometry \cite{HQZ16,QCC18,Xiong_Liu_2020};
spectral hypergraph theory \cite{benson2019,BeGlLe15,CQZ13,LQY13,Qi_Luo_2017, XiCh13}; and
high-order Markov chains and multilinear PageRank \cite{CRZT20p,GLY15}, to name a few.

Like matrices, tensors as mathematical objects are more than their coordinate 
representation. 
We can and will use the multidimensional matrix representation to define and compute tensor eigenvalues \cite{Qi_2007}. 
In particular, we will use the following definition for tensors from \cite{ Kolda_Mayo_2011, Qi_2005}.

\begin{definition}
A real $m^{th}$ order $n$ dimensional tensor $\mathcal{A}$ can be represented by $n^m$ real entries $A_{i_1,...,i_m} \in \mathbb{R}$ where $i_j = 1,...,n$ for $j = 1,...,m$. 
\label{def: tensor}
\end{definition}
Here, $\mathcal{A} \in \mathbb{R}^{n \times n ..... \times n} \text{ or } \R^{[m,n]}$ and 
$m$ is the number of modes where each is $n$ dimensional. When the dimensions for each 
mode are the same the tensor is called \textit{square}; otherwise, it is called 
\textit{rectangular}. A square tensor $\A \in \R ^{[m,n]}$ is further called symmetric 
if 
\begin{align}\label{def: symmetric tensor}
a _{i_{p(1)}...i_{p(m)}} = a _{i_1 ... i_m} \quad \textit{ for all } 
\quad p \in \Pi_m \quad \textit{ and } \quad i_1,...i_m \in \{ 1, ... , n\},
\end{align} 
where $ \Pi_m = $  set of all permutations of $(1, ..., m )$.
In this paper, we limit our discussion to symmetric tensors.

The main contribution of this paper is the introduction of an algorithm that 
accelerates the shifted symmetric higher-order power method (S-SHOPM) of
\cite{Kolda_Mayo_2011} for computing solutions to a class of tensor eigenvalue problems
known as $Z$-eigenvalues \cite{Qi_2005}. Tensor $Z$-eigenpairs are
scalar-vector pairs $(\lambda,x) \in \R \times \R^n$ that satisfy
\begin{align}\label{eqn:zeigprob}
\A x^{m-1} = \lambda x, \quad \text{ and } \quad x^Tx = 1,
\end{align}
where the tensor-vector multiplication is defined for symmetric 
$\A \in \R^{[m ,n]}$ and $x \in \R^n$ by
\begin{align}\label{def: tensor vec mult}
(\A x^{m-r})_{i_1, \ldots, i_r} \equiv \sum _{i_{r+1},\ldots,i_m} 
a_{i_1\dots i_m}x_{i_{r+1}} \ldots x_{i_m},
\end{align}  
for all $i_1, \ldots, i_r \in \{ 1,\ldots,n\}$ and $r \in \{ 0,\dots, m-1\}$.  
The definition of $Z$-eigenvalues agrees with the definition of $l^p$ eigenvalues 
from \cite{Lim_2005} for $p=2$ \cite{Cipolla_Redivo-Zaglia_Tudisco_2020_lp_extrap}.

\begin{remark}\label{rem:tenseig} 
From \eqref{eqn:zeigprob}, for $m$ even, $(\lambda,-x)$ is a $Z$-eigenpair whenever 
$(\lambda,x)$ is a $Z$-eigenpair, and for $m$ odd, $(-\lambda,-x)$ is a $Z$-eigenpair 
whenever $(\lambda,x)$ is a $Z$-eigenpair \cite{Kolda_Mayo_2011}; 
we do not consider these pairs to be distinct.
$Z$-eigenpairs as defined in \eqref{eqn:zeigprob} are a subset of the $E$-eigenpairs
which are pairs $(\lambda,x) \in \C \times \C^n$ with $Ax^{m-1} = \lambda x$ and 
$x^H x = 1$, where $x^H$ is the conjugate transpose of $x$. It was shown in 
\cite{CaSt13} that a generic symmetric tensor $\A \in \R^{[m,n]}$ has
$((m-1)^n -1)/(m-2)$ distinct $E$-eigenvalue classes; hence this number provides an upper 
bound on the number of distinct $Z$-eigenpairs.  
\end{remark}

Analogous to (shifted) power iterations for matrices, the S-SHOPM generates a 
sequence of approximate $Z$-eigenvectors by repeated tensor-vector multiplications,
shifts by a given parameter $\alpha$, and subsequent normalizations.
The choice of $\alpha$ is described in detail
in \cite{Kolda_Mayo_2011}, and briefly summarized here in section \ref{sec:bgtheory}.

\begin{algorithm}[S-SHOPM]
    \label{alg:sshopm}
    {Given a symmetric tensor $\A \in \R^{[m, n]}$ and $\alpha \in \R$, and
            $x_0 \in \R^n$ with $\|x_0\| = 1$}
    \begin{algorithmic}[1]
    \State{Let $\chi = 1$, if $\alpha \geq 0$; and $\chi = -1$, otherwise}
    \State{$\lambda_0 \gets \A x_0^m$.}
    \For{k $= 0, 1,\ldots$}
    \State{${v}_{k+1} \gets \chi (\A x_k^{m-1} + \alpha x_k)$}
    \State{$x_{k+1} \gets {v}_{k+1} / || {v}_{k+1} ||$}
    \State{$\lambda_{k+1} \gets \A x_{k+1}^m$}
    \EndFor
    \end{algorithmic}
\end{algorithm}

The original S-SHOPM was improved with the introduction of an adaptively-shifted
method in \cite{Kolda_Mayo_2014}, which substantially reduces the number of iterations 
for convergence.
Herein, we introduce an acceleration to the S-SHOPM by a one-step extrapolation.
We demonstrate analytically and numerically that this method, which has low
per-iteration complexity, reduces the asymptotic convergence rate of the iteration, hence
accelerates convergence.
Additionally, we introduce an algorithm for automated parameter selection 
which accelerates both the the S-SHOPM with static shifts as in \cite{Kolda_Mayo_2011} 
and the adaptively shifted method of \cite{Kolda_Mayo_2014}. 

Our technique to improve convergence is a depth-1 extrapolation: at each step 
the new eigenvector approximation is defined by a linear combination of the latest and
previous fixed-point updates.
Prior to the normalization on line 5 of algorithm \ref{alg:sshopm} we compute the
update
\[
u_{k+1} = (1-\gamma) v_{k+1} + \gamma v_k,
\]
and proceed to normalize the extrapolated iterate $u_{k+1}$ to produce the 
normalized eigenvector approximation $x_{k+1}$.  Our analysis 
includes the choice of optimal extrapolation parameter $\gamma$ and its relation 
to the shifting parameter $\alpha$. 
In section \ref{sec:dynamic}, we introduce and demonstrate an automated strategy
to set the extrapolation parameter that works with both a constant shift $\alpha$
and an adaptively updated shift $\alpha_k$ as introduced in \cite{Kolda_Mayo_2014}.

The underlying theory for computing $Z$-eigenpairs comes from the work of Kofidis and 
Regalia \cite{Kofidis_Regalia_2002} 
wherein they formulated the higher order power method (HOPM).  
For symmetric square tensors, this method is known as symmetric higher order power method 
(SHOPM), which is comparable to the well known power method for the matrices. 
As shown in \cite{Kofidis_Regalia_2002}, this method, in general, does 
not always converge.  However, under certain convexity conditions, 
the SHOPM is guaranteed to converge for even order tensors. 

In \cite{Kolda_Mayo_2011}, Kolda and Mayo proposed the S-SHOPM, as given here
in algorithm \ref{alg:sshopm}. 
From each initial vector used to start the iterative method, the S-SHOPM
guarantees convergence to an eigenvalue and corresponding eigenvector of a 
symmetric tensor of either odd or even order. 
In contrast to the shifted power iteration for matrices for which the algorithm
necessarily converges to the largest magnitude eigenvalue of the shifted matrix; 
for tensors, each eigenpair
has a distinct region of convergence over the unit sphere.  Hence even if only the
dominant eigenpair is sought, simulations generally consist of a substantial number
of runs from different starting vectors, and fast convergence for all of the 
eigenpairs is essential for efficiency.

If shifts are chosen large enough to ensure satisfaction of the convexity conditions,
however, the S-SHOPM can be slow to converge, 
as the asymptotically linear convergence rate depends on the shift.
For larger tensors the increased number of iterations to convergence can be problematic
due to the computational complexity of $\mathcal{O}(n^m)$ for each tensor-vector product
$\A x^{m-1}$, as given by \eqref{def: tensor vec mult}.

The proposed extrapolation method is based on one used
to accelerate the standard power iteration for matrix eigenvalue problems in 
\cite{Nigam_Pollock_2021}, and similarly, to accelerate the Arnoldi method in 
\cite{PoSc21}. 
In contrast to the matrix setting where a dynamically assigned extrapolation parameter
was found effective, here we found that a constant extrapolation 
parameter gives both better performance and has theoretical justification.  
The analysis of the presently proposed method has little in common with that presented
in \cite{Nigam_Pollock_2021,PoSc21}, both of which relied on the linear independence of
each eigenmode, which cannot be assumed for tensors.

Extrapolation methods have already been introduced to
accelerate tensor computations, for instance nonlinear GMRES for tensor
Tucker decomposition in \cite{Sterck_2012}, Nesterov acceleration for canonical
tensor decomposition in \cite{MYS20}, and simplified topological $\epsilon$-algorithms 
for $l^p$ tensor eigenvalue problems in 
\cite{Cipolla_Redivo-Zaglia_Tudisco_2020_lp_extrap}.
The convergence and acceleration properties of extrapolation methods can however be 
challenging to analyze, and the results presented herein are the first to our knowledge
to establish accelerated convergence rates for tensor eigenvalue problems theoretically
as well as computationally. In our approach, we take advantage of the fixed-point 
formulation of the S-SHOPM in \cite{Kolda_Mayo_2011} and are able to generalize the 
results to our extrapolation method
by considering the spectral radius of an augmented Jacobian matrix. 

The remainder of paper is structured as follows. 
In section \ref{sec:bgtheory} 
we review relevant background theory on the convergence of S-SHOPM which forms 
the basis of our analysis.
In section \ref{sec:extrap}, 
we state the extrapolated algorithm, and analyze its convergence
properties including the determination of an optimal extrapolation parameter in 
the main theoretical result, theorem \ref{thm:acc}.
In section \ref{sec:dynamic}, we introduce and demonstrate an automated strategy
to set the extrapolation parameter that works with both a constant shift $\alpha$
and a dynamically updated shift $\alpha_k$.
In section \ref{sec:numerics}, we present numerical examples illustrating the theory.

\section{Background theory}\label{sec:bgtheory}
The convergence and acceleration theory for the extrapolated method builds upon the
convergence theory for the S-SHOPM, as presented in \cite{Kolda_Mayo_2011}.
We next summarize the relevant results from \cite{Kolda_Mayo_2011}.

Let $\Sigma$ be the unit sphere on $\R ^n$, given by 
$ \Sigma = \{ x \in \R ^n : x^Tx = 1 \}$, 
and denote the spectral radius of a matrix $A$ by $\rho (A)$, 
the maximal magnitude of the eigenvalues of $A$.

\subsection{Fixed point theory}\label{subsec:fp} 
Fixed point analysis is integral to understanding the convergence of the S-SHOPM as well 
as the acceleration by extrapolation. 
Below we summarize some standard concepts. 

\begin{definition}
    A point $x_* \in \R^n$ is a fixed point of $ \phi : \R^n \to \R^n $ if $\phi (x_* ) = x_*$. Further, $x_*$ is an attracting fixed point if there exists $\delta > 0$ such that the sequence $\{ x_n\} $ defined by $x_{k+1} = \phi (x_k) $ converges to $x_*$ for any $x_0$ such that $\| x_0 - x_* \| \leq \delta$. 
    \label{def: attract fxd pt}
\end{definition}

\begin{theorem}{\cite[Theorem 2.8]{Rheinboldt}}\label{thm:afp}
    Let $x_* \in \R^n$ be a fixed point of $\phi : \R^n \to \R^n$ and let $J : \R^n \to \R^{n \times n}$ be the Jacobian of $\phi$. Then $x_*$ is an attracting fixed point if 
$\rho ( J ( x_*)) < 1$; further, if $\rho( J ( x_*)) > 0$, then the convergence of the fixed point iteration to $x_*$ is linear with rate $\rho( J ( x_*))$. 
\end{theorem}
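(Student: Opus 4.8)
The plan is to reduce the statement to a local contraction argument in a cleverly chosen norm, which is the standard route to Ostrowski-type attraction results. First I would set $\rho := \rho(J(x_*))$ and, using $\rho < 1$, fix $\epsilon > 0$ small enough that $\rho + \epsilon < 1$. The key ingredient is the matrix-analysis lemma that for every $\epsilon > 0$ there is a vector norm $\|\cdot\|_*$ on $\R^n$ whose induced operator norm satisfies $\|J(x_*)\|_* \le \rho + \epsilon$; this is obtained by conjugating $J(x_*)$ to (block) triangular form and rescaling the off-diagonal part, and it is precisely what lets the spectral radius — which is not itself a submultiplicative norm — govern a genuine contraction factor.

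Next I would exploit differentiability of $\phi$ at $x_*$. The first-order expansion about the fixed point, together with $\phi(x_*) = x_*$, gives $\phi(x) - x_* = J(x_*)(x - x_*) + g(x)$ with $\|g(x)\|_* = o(\|x - x_*\|_*)$ as $x \to x_*$. Choosing $\eta > 0$ with $\rho + \epsilon + \eta < 1$, I can pick $\delta > 0$ so that $\|g(x)\|_* \le \eta \|x - x_*\|_*$ whenever $\|x - x_*\|_* \le \delta$. The triangle inequality then yields $\|\phi(x) - x_*\|_* \le (\rho + \epsilon + \eta)\|x - x_*\|_* =: c\,\|x - x_*\|_*$ with $c < 1$. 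In particular the closed $\|\cdot\|_*$-ball of radius $\delta$ is mapped into itself, so the iteration remains in the ball and $\|x_k - x_*\|_* \le c^k \|x_0 - x_*\|_*$, which proves $x_*$ is attracting; the conclusion transfers to the Euclidean norm $\|\cdot\|$ by equivalence of norms on $\R^n$.

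For the rate, the upper bound is immediate: letting $\epsilon,\eta \to 0$ shows $\limsup_k \|x_k - x_*\|^{1/k} \le \rho$, so convergence is at worst linear with factor $\rho$. The delicate part — and what I expect to be the main obstacle — is the matching lower bound needed to assert the rate is \emph{exactly} $\rho$ when $\rho > 0$, i.e.\ that the error cannot decay faster than $\rho^k$. This forces one to track the component of the error along the invariant subspace of $J(x_*)$ belonging to its dominant eigenvalue(s) and to verify that the nonlinear remainder $g$ does not annihilate it; the cleanest route isolates that subspace and bounds $\|x_{k+1} - x_*\|_*$ from below by roughly $(\rho - \epsilon)$ times the dominant component of $\|x_k - x_*\|_*$, pinning the rate to $\rho$ for starting errors with nonzero dominant component. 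Aside from this, the only remaining care is the construction of the adapted norm $\|\cdot\|_*$ and the passage back to $\|\cdot\|$, both of which are routine once the contraction is in place.
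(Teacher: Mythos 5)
The first thing to note is that the paper itself does not prove this statement: it is quoted as background, with the proof deferred entirely to the cited reference (Rheinboldt), and it is then used as a black box in the analysis of the S-SHOPM and ES-SHOPM Jacobians. So there is no internal proof to compare against; your proposal has to be judged against the standard literature argument, which it essentially reproduces. The attraction half of your write-up is correct and complete in outline: the adapted-norm lemma giving an induced norm with $\|J(x_*)\|_* \le \rho + \epsilon$, the first-order expansion $\phi(x) - x_* = J(x_*)(x-x_*) + g(x)$ with $\|g(x)\| = o(\|x - x_*\|)$, and the resulting local contraction with factor $\rho + \epsilon + \eta < 1$ is exactly the Ostrowski-type proof one finds in Ortega--Rheinboldt, and the upper bound $\limsup_k \|x_k - x_*\|^{1/k} \le \rho$ follows as you say, since once the iterates converge they eventually enter the $\delta$-ball associated with any smaller choice of $\epsilon$ and $\eta$.

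The one place the proposal falls short of the full statement is the part you flag yourself: showing the rate is exactly $\rho$ when $\rho > 0$. Your sketch---bound the dominant component of the error from below by roughly $(\rho - \epsilon)$ times its previous value---hides the real difficulty: the remainder $g$ is only $o(\|e_k\|)$ relative to the \emph{whole} error, not relative to its dominant component, so a lower bound on the dominant component requires first showing that the ratio of dominant component to total error stays bounded away from zero along the orbit (a cone-invariance argument). Moreover, even with that argument the conclusion cannot hold for every starting point: initial errors lying on invariant manifolds tangent to sub-dominant eigenspaces genuinely converge faster than $\rho^k$. This is precisely why the sharp form of the result in the literature is phrased in terms of the root-convergence factor of the \emph{iteration}---a supremum over starting points near $x_*$---being equal to $\rho(J(x_*))$, rather than every orbit converging with rate exactly $\rho$. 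For the purposes of this paper the distinction is harmless, since the theorem is only invoked to compare asymptotic rates through spectral radii of Jacobians, but as a standalone proof of the quoted statement your final paragraph would need to be completed along those lines rather than left as an expectation.
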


\begin{theorem}{\cite[Theorem 1.3.7]{Stuart_1998}}\label{thm:afu}
    Let $x_* \in \R^n$ be a fixed point of $\phi : \R^n \to \R^n$, and let $J : \R^n \to \R ^{n \times n}$ be the Jacobian of $\phi$. Then $x_*$ is an unstable fixed point if 
$\rho ( J ( x_*)) > 1$. 
\end{theorem}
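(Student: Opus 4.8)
The plan is to prove instability by linearizing $\phi$ at $x_*$ and exploiting the expanding direction guaranteed by the hypothesis $\rho(J(x_*)) > 1$. Writing $J = J(x_*)$ and letting $e_k = x_k - x_*$ denote the error of a fixed-point iterate $x_{k+1} = \phi(x_k)$, differentiability of $\phi$ at $x_*$ yields $e_{k+1} = J e_k + r(x_k)$ with $\|r(x)\|/\|x - x_*\| \to 0$ as $x \to x_*$. Hence for every $\eta > 0$ there is a radius $\delta_\eta > 0$ with $\|r(x)\| \le \eta\,\|x - x_*\|$ whenever $\|x - x_*\| \le \delta_\eta$, and the entire argument will be carried out inside such a ball, where the linear term controls the nonlinear remainder.

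First I would set up a $J$-invariant splitting $\R^n = E_u \oplus E_c$, where $E_u$ is the sum of the generalized eigenspaces for the eigenvalues of modulus strictly greater than $1$ (nontrivial, since $\rho(J) > 1$) and $E_c$ is the complementary invariant subspace. Choosing constants $1 < \tau < \sigma$ with $\sigma$ below the smallest modulus occurring in $E_u$ and $\tau$ above $\rho(J|_{E_c})$, standard results on adapted (Lyapunov) norms furnish a norm on $\R^n$ in which $\|Jv\| \ge \sigma\,\|v\|$ for $v \in E_u$ and $\|Jw\| \le \tau\,\|w\|$ for $w \in E_c$. Decomposing $e_k = e_k^u + e_k^c$ accordingly, the linearization gives the component estimates $\|e_{k+1}^u\| \ge \sigma\,\|e_k^u\| - \eta\,\|e_k\|$ and $\|e_{k+1}^c\| \le \tau\,\|e_k^c\| + \eta\,\|e_k\|$.

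The core of the argument is a forward-invariant cone. Define $C = \{\, e : \|e^c\| \le \|e^u\| \,\}$ and use the norm $\|e\| = \max(\|e^u\|, \|e^c\|)$, so that $\|e\| = \|e^u\|$ on $C$. I would show that for $\eta$ sufficiently small (hence $\delta_\eta$ sufficiently small), any nonzero $e_k \in C$ with $\|e_k\|$ small maps to $e_{k+1} \in C$ with $\|e_{k+1}\| \ge \sigma'\,\|e_k\|$ for a fixed $\sigma' > 1$: the estimates above give $\|e_{k+1}^u\| \ge (\sigma - \eta)\,\|e_k\|$ and $\|e_{k+1}^c\| \le (\tau + \eta)\,\|e_k\|$, so once $\eta \le (\sigma - \tau)/2$ and $\eta < \sigma - 1$ we have $\|e_{k+1}^c\| \le \|e_{k+1}^u\|$ (cone invariance) and $\sigma' := \sigma - \eta > 1$. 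Iterating, any $x_0 \ne x_*$ chosen inside $C$ and inside the ball of radius $\delta_\eta$ produces errors with $\|e_k\| \ge (\sigma')^k \|e_0\|$ for as long as the orbit stays in the ball; since $\sigma' > 1$ this cannot persist, so the orbit must leave the ball. Because such starting points exist arbitrarily close to $x_*$, no $\delta$-ball can satisfy the convergence requirement of Definition \ref{def: attract fxd pt}, and $x_*$ is unstable.

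The main obstacle I anticipate is this cone step: one must fix the adapted norms and then the threshold $\eta$ in the correct order so that the uniform linear gap $\sigma - \tau > 0$ strictly dominates the nonlinear remainder, simultaneously preserving $C$ and forcing growth by a factor bounded away from $1$. The existence of adapted norms making $J$ uniformly expanding on $E_u$ and uniformly controlled on $E_c$ is the technical ingredient that renders $r$ absorbable; once that is secured, the escape estimate is a routine induction.
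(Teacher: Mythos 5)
The paper offers no proof of this statement: theorem \ref{thm:afu} is quoted directly from the cited reference (Stuart and Humphries, Theorem 1.3.7), so there is no internal argument to compare yours against, and I can only assess your proposal on its own terms. What you give is the standard linearization proof of this instability result, and its core is sound. The splitting $\R^n = E_u \oplus E_c$ into the $J$-invariant subspace for eigenvalues of modulus greater than one (nontrivial since $\rho(J(x_*))>1$) and its complementary invariant subspace, the adapted norms making $J$ expand by $\sigma>1$ on $E_u$ and bounded by $\tau \in (1,\sigma)$ on $E_c$, the forward-invariant cone $\|e^c\|\le\|e^u\|$, and the growth factor $\sigma' = \sigma-\eta>1$ once $\eta \le (\sigma-\tau)/2$ and $\eta < \sigma-1$ are exactly the ingredients of the textbook argument; you also fix the quantifiers in the correct order (splitting and norms first, then $\eta$, then $\delta_\eta$), and the absorption of projection and norm-equivalence constants into $\eta$ is harmless. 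Since nonzero points of $E_u$ lie in the cone arbitrarily close to $x_*$, every neighborhood of $x_*$ contains starting points whose orbits must exit the fixed ball of radius $\delta_\eta$: this is precisely Lyapunov instability, which is the meaning of ``unstable'' in the cited source.

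Your final inference, however, is a non sequitur: from the fact that an orbit exits the $\delta_\eta$-ball you conclude that ``no $\delta$-ball can satisfy the convergence requirement of Definition \ref{def: attract fxd pt},'' but exiting a ball once does not prevent the orbit from later re-entering and converging to $x_*$; attraction in the sense of definition \ref{def: attract fxd pt} and Lyapunov stability are logically independent properties, and your cone estimate says nothing about iterates after they leave the ball. Ruling out attraction would require a genuinely different argument --- for instance, a local (center-)stable manifold theorem showing that the set of points whose orbits stay near $x_*$ and converge to it is a manifold of dimension $n - \dim E_u < n$, hence contains no ball. Since the theorem asserts instability in the Lyapunov sense, which your cone argument does establish, the proof stands once you delete or rephrase that last appeal to definition \ref{def: attract fxd pt}.
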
 

A smaller value of $\rho(J(x_\ast)) \in (0,1)$ in theorem \ref{thm:afp} 
indicates a faster asymptotic convergence rate.
We will quantify the improvement in convergence rate in our acceleration method by 
showing it decreases the value of the spectral radius of the Jacobian in comparison to 
the S-SHOPM run with the same shift parameter, as introduced next.

\subsection{Shifted Symmetric Higher Order Power Method (S-SHOPM)}\label{subsec:sshopm}
In \cite{Lim_2005}, the $l^2$ (or $Z$-) eigenpairs are characterized as critical points 
of the Rayleigh quotient
$x^T \A x^{m-1} =  \A x^m$ for $x \in \Sigma$.
Denoting $f(x) = \A x^m$, the SHOPM is constructed seek maxima or minima of the
Rayleigh quotient, \cite{LMV00,Kofidis_Regalia_2002, Kolda_Mayo_2011} namely
$\max_{x \in \Sigma} |f(x)|$ or $\min_{x \in \Sigma} -|f(x)|.$

However, the convergence for this iterative method depends on the convexity (or concavity) of the function $f(x)$. 
For tensors where the underlying function $f(x)$ is not convex (or concave), 
the method does not guarantee convergence.
The S-SHOPM of \cite{Kolda_Mayo_2011} enforces this convexity (or concavity) by the 
introduction of a shift term $\alpha$. The underlying function for S-SHOPM becomes 
$$\hat{f}(x) \equiv f(x) + \alpha(x^Tx)^{m/2}.$$  

The idea for the shift parameter has been proposed before \cite{erdogan2009,regalia2003}, but these differ from the above definition in the exponent for the constant term. 
Algorithm \ref{alg:sshopm} illustrates the iterative scheme for the S-SHOPM. 
An appropriate choice of $\alpha$ guarantees convergence to the eigenvalues 
using the S-SHOPM, where the particular eigenpair converged to depends on the 
starting iterate. 
As shown in \cite{Kolda_Mayo_2011}, $f(x)$ is convex for $\alpha > \beta(\A)$, 
and $f(x)$ is concave for $\alpha < - \beta (\A)$,
where 
\begin{equation}
\label{eqn:beta function}
\beta (\A) \equiv ( m -1 ) \max_{x \in \Sigma} \rho (\A x^{m-2}).    
\end{equation}
Computationally, this characterization poses a challenge because
the quantity $\beta(\A)$ is in general a priori unknown. Overestimating $\beta(\A)$ 
to choose a safe shift $\alpha$ for S-SHOPM slows the convergence, and underestimating 
$\beta(\A)$ can prevent convergence altogether.  
As we will show in section \ref{sec:extrap},
our extrapolation approach provably accelerates the S-SHOPM 
convergence for any $\alpha$ sufficient for convergence of the S-SHOPM. 

\subsection{Convergence of S-SHOPM}\label{subsec:sshopm-conv}
We next summarize 
some background on the convergence properties of S-SHOPM. 
As shown in \cite{Kolda_Mayo_2011}, 
we can use the following matrix to classify 
an eigenpair $(\lambda_\ast,x_\ast)$
as a local minimum or maximum of the Rayleigh quotient 
\[
C(\lambda_*, x_*) \equiv U^T_* ((m-1) \A x^{m-2}_* - \lambda_* I)U_* 
\in \R ^{ (n-1 ) \times (n-1)},\] 
where the columns of $U_* \in \R ^{(n-1) \times (n-1)}$ forms an orthogonal basis for 
$x_\ast^\perp$, where  $x_\ast^\perp = \{y \in \R ^n : (y^T x^\ast = 0 \}$,
the orthogonal complement to $x^\ast$. 

\begin{definition}
    Let $\A \in \R ^{[m,n]}$ be a symmetric tensor. We say an eigenpair $(\lambda , x)$ of $\A \in \R^{[m,n]}$ is positive stable if $C(\lambda , x)$ is positive definite, negative stable if $C(\lambda , x)$ is negative definite and unstable if $C(\lambda, x)$ is indefinite. 
    \label{def: eigenpair classification}
\end{definition}

The main convergence properties for the S-SHOPM are listed below. 
\begin{theorem}\cite[Theorem 4.4,Corollary 4.6]{Kolda_Mayo_2011} 
\label{thm:sshopm convergence convex}
Let $\A \in \R^{[m,n]}$ be symmetric. For $\alpha > \beta (\A)$, $(\alpha < -\beta(\A)$
the iterates $\{ \lambda_k, x_k \} $  produced by the S-SHOPM algorithm satisfy the following properties. 
\begin{itemize}
        \item[(a)] The sequence $ \{ \lambda_k \}$ is non decreasing (non increasing)
and there exists a $\lambda_*$ such that $\lambda_k \rightarrow \lambda_*$. 
        \item[(b)]  The sequence $ \{ x_k \}$ has an accumulation point. For every such accumulation point $x_*$, the pair $(\lambda_*, x_*)$ is an eigenpair of $\A$.
        \item[(c)] If $\A$ has finitely many real eigenvectors, then there exists $x_*$ such that $x_k \rightarrow x_*$.
\end{itemize} 
\end{theorem}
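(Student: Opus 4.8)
The plan is to run a monotone-convergence argument driven by the convexity of the shifted objective $\hat f$. Throughout I take the convex case $\alpha > \beta(\A)$ (so $\chi = 1$ in Algorithm \ref{alg:sshopm}); the concave case $\alpha < -\beta(\A)$ follows by an identical argument with all inequalities reversed and $\chi = -1$. First I would record that on the sphere the gradient of $\hat f(x) = f(x) + \alpha(x^Tx)^{m/2}$ is $\nabla \hat f(x) = m(\A x^{m-1} + \alpha x)$, using symmetry of $\A$ and $\nabla f(x) = m\A x^{m-1}$; in particular $\nabla\hat f(x_k) = m\,v_{k+1}$, which is exactly the unnormalized update of the algorithm. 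I would also note that $\hat f(x_k) = \lambda_k + \alpha$ since each $x_k\in\Sigma$.

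For part (a), convexity of $\hat f$ gives $\hat f(x_{k+1}) - \hat f(x_k) \ge \nabla\hat f(x_k)^T(x_{k+1}-x_k) = m\,v_{k+1}^T(x_{k+1}-x_k)$. Since $x_{k+1}=v_{k+1}/\|v_{k+1}\|$ we have $v_{k+1}^Tx_{k+1} = \|v_{k+1}\|$, while $v_{k+1}^Tx_k = \lambda_k+\alpha$; hence the right-hand side equals $m(\|v_{k+1}\| - (\lambda_k+\alpha))$, which is nonnegative by Cauchy--Schwarz, $v_{k+1}^Tx_k \le \|v_{k+1}\|\,\|x_k\| = \|v_{k+1}\|$. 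Rewriting the left-hand side through $\hat f(x_k)=\lambda_k+\alpha$ yields $\lambda_{k+1}-\lambda_k \ge m(\|v_{k+1}\|-(\lambda_k+\alpha)) \ge 0$, so $\{\lambda_k\}$ is nondecreasing. Boundedness of $\{\lambda_k\}$ is immediate from continuity of $f$ on the compact set $\Sigma$, and the monotone convergence theorem then gives $\lambda_k\to\lambda_*$.

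For part (b), compactness of $\Sigma$ furnishes a subsequence $x_{k_j}\to x_*$. The decisive observation is that $\lambda_{k+1}-\lambda_k\to 0$ forces the Cauchy--Schwarz slack to vanish: $\|v_{k+1}\|-(\lambda_k+\alpha)\to 0$. I would make this quantitative via the identity $\|v_{k+1}-\|v_{k+1}\|x_k\|^2 = 2\|v_{k+1}\|(\|v_{k+1}\|-v_{k+1}^Tx_k)$, so that $v_{k+1}-\|v_{k+1}\|x_k\to 0$. Substituting $v_{k+1}=\A x_k^{m-1}+\alpha x_k$ and passing to the limit along the subsequence (using continuity and $\|v_{k+1}\|\to\lambda_*+\alpha$) yields $\A x_*^{m-1}-\lambda_*x_*=0$; together with $x_*\in\Sigma$ this is precisely the eigenpair condition \eqref{eqn:zeigprob}.

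The hard part is part (c). Here I would first upgrade the preceding estimate to $\|x_{k+1}-x_k\|\to 0$: since $x_{k+1}-x_k = (v_{k+1}-\|v_{k+1}\|x_k)/\|v_{k+1}\|$, this follows once $\|v_{k+1}\|$ is bounded away from zero. That lower bound is where the choice $\alpha>\beta(\A)$ re-enters: every eigenvalue obeys $|\lambda|=|x^T(\A x^{m-2})x|\le \max_{x\in\Sigma}\rho(\A x^{m-2}) = \beta(\A)/(m-1) \le \beta(\A)$, so $\|v_{k+1}\|\to\lambda_*+\alpha \ge \alpha-\beta(\A)>0$. With $\|x_{k+1}-x_k\|\to 0$ and $\{x_k\}$ confined to the compact $\Sigma$, the set of accumulation points is connected (a standard result, e.g. Ostrowski, on bounded sequences with vanishing successive differences). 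By part (b) every accumulation point is an eigenvector for the single limit $\lambda_*$; the finiteness hypothesis makes this connected set finite, hence a single point, and $x_k$ converges to it. The main obstacle is less any single estimate than organizing the limit passage so that the per-step convexity slack simultaneously controls the eigenpair residual and the step size; the connectedness-of-the-cluster-set step is the one most easily overlooked.
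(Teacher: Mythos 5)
Your proposal is correct, but note that the paper itself offers no proof of this theorem --- it is quoted as background directly from Kolda and Mayo \cite{Kolda_Mayo_2011}, and your argument is a faithful reconstruction of that source's proof: the convexity-based ascent bound $\lambda_{k+1}-\lambda_k \ge m\left(\|v_{k+1}\| - (\lambda_k+\alpha)\right) \ge 0$ for (a), the vanishing Cauchy--Schwarz slack forcing accumulation points to be eigenpairs for (b), and $\|x_{k+1}-x_k\|\to 0$ combined with connectedness of the cluster set and the finiteness hypothesis for (c). All the delicate points check out, including the lower bound $\lambda_*+\alpha \ge \alpha-\beta(\A)>0$ that keeps $\|v_{k+1}\|$ away from zero, and the Ostrowski-type connectedness step needed to collapse the accumulation set to a single point.
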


To understand the rate of convergence hence the efficiency of the method, we next
look at the fixed-point formulation and the spectral radius of the Jacobian.

\subsection{Fixed point formulation}\label{subsec:ss-fp}
To characterize the eigenpairs of a symmetric tensor $\A$, we consider the fixed 
point characterization as in \cite{Kolda_Mayo_2011}. 
For the convex case of the S-SHOPM ($\alpha > \beta(\A)$), 
we can express the algorithm as the fixed point 
iteration $x_{k+1} = \phi (x_k; \alpha)$, where $\phi$ is defined as 
\begin{equation}
    \label{eqn:phi function}
    \phi(x; \alpha) = \phi_1 (\phi_2(x ; \alpha)) \text{ with } \phi_1 (x) = \f{x}{(x^Tx)^{1/2}} \text{ and } \phi_2(x ; \alpha) = \A x^{m-1} + \alpha x.
\end{equation}

The Jacobian of $\phi$ as defined in \eqref{eqn:phi function} is 
$J(x; \alpha) = \phi _1 '(\phi_2 (x ; \alpha))\phi_2 '(x ; \alpha).$ 
The derivatives of $\phi_1$ and $\phi_2$ from \eqref{eqn:phi function} are
\begin{equation}
    \label{eqn: phi_1 prime and phi_2 prime}
    \phi_1 '(x) = \f{(x^T x)I - xx^T}{(x^Tx)^{3/2}}, 
    \text{ and } \phi_2 ' (x ; \alpha) = (m-1)\A x^{m-2} + \alpha I .
\end{equation}
Evaluating at eigenpair $(\lambda,x)$ yields
\begin{align}
\label{eqn:phis at the eigenpair}
 \phi_2 (x; \alpha) = (\lambda + \alpha)x,~ 
\phi_1 ' (\phi_2 (x ; \alpha )) = \f{I - xx^T}{|\lambda + \alpha|},~  
 \text{ and } \phi_2 '(x ; \alpha) = (m-1)\A x^{m-2} + \alpha I.   
\end{align}

Therefore, since $\lambda + \alpha > 0$ 
the Jacobian at the eigenpair is 
\begin{equation}\label{eqn:sshopm jacobian}
J(x; \alpha) = \f{(m-1)(\A x^{m-2} - \lambda x x^T) + \alpha(I - xx^T)}{\lambda + \alpha}.
\end{equation}
For the concave case ($\alpha < -\beta(\A)$)
we can express the algorithm as the fixed-point iteration $x_{k+1} = -\phi(x_k,\alpha)$,
with $\phi$ given by \eqref{eqn:phi function}. 
Following the steps above we find 
\begin{align*}
J(x; \alpha) 
= -\f{(m-1)(\A x^{m-2} - \lambda x x^T) + \alpha(I - xx^T)}{|\lambda + \alpha|}
= \f{(m-1)(\A x^{m-2} - \lambda x x^T) + \alpha(I - xx^T)}{\lambda + \alpha},
\end{align*} 
since $\lambda + \alpha < 0$. Hence in either case the Jacobian is given by
\eqref{eqn:sshopm jacobian}.

An analysis of the spectral radius of \eqref{eqn:sshopm jacobian} is used
in \cite{Kolda_Mayo_2011} 
to determine the following classification of the eigenpairs as fixed points, 
summarized for both convex and concave situations as follows.
\begin{theorem}\label{thm:ns}\cite[Theorem 4.8, Corollary 2.7]{Kolda_Mayo_2011}
Let $(\lambda, x)$ be an eigenpair of a symmetric tensor $\A \in \R ^{[m,n]}$. 
Assume $\alpha \in \R$ such that $\alpha > \beta (\A)$ ($\alpha < -\beta(A)$), 
where $\beta (\A)$ is defined in \eqref{eqn:beta function}. 
Let $\phi (x)$ be given by \eqref{eqn:phi function}. 
Then $(\lambda, x)$ is negative stable (positive stable) 
if and only if $x$ is a linearly attracting fixed point of $\phi$ ($-\phi$).
\end{theorem}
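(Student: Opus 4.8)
The plan is to analyze the spectrum of the Jacobian $J(x;\alpha)$ in \eqref{eqn:sshopm jacobian} directly and match it against the spectrum of the classifying matrix $C(\lambda,x)$, then invoke Theorems \ref{thm:afp} and \ref{thm:afu} to pass between the spectral radius condition and the (un)stable fixed-point property. Write $H \equiv \A x^{m-2}$, a symmetric $n \times n$ matrix satisfying $Hx = \A x^{m-1} = \lambda x$, since $(\lambda,x)$ is an eigenpair with $x^Tx = 1$. First I would show that $x$ is an eigenvector of $J(x;\alpha)$ with eigenvalue $0$: substituting $y = x$ into \eqref{eqn:sshopm jacobian} and using $Hx = \lambda x$ together with $x^Tx = 1$ annihilates both terms in the numerator. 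Next, because $H$ is symmetric with $x$ an eigenvector, the orthogonal complement $x^\perp$ is $H$-invariant; combined with the fact that the projector $I - xx^T$ acts as the identity on $x^\perp$, this shows that $J(x;\alpha)$ maps $x^\perp$ into itself and acts there as $\bigl((m-1)H + \alpha I\bigr)/(\lambda+\alpha)$.

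Letting $\mu_1,\dots,\mu_{n-1}$ denote the eigenvalues of $H$ restricted to $x^\perp$, the eigenvalues of $J(x;\alpha)$ are therefore $0$ together with $j_i = \bigl((m-1)\mu_i + \alpha\bigr)/(\lambda+\alpha)$, while by construction the eigenvalues of $C(\lambda,x) = U^T_*\bigl((m-1)H - \lambda I\bigr)U_*$ are exactly $c_i = (m-1)\mu_i - \lambda$. The key quantitative step uses the shift hypothesis. In the convex case $\alpha > \beta(\A)$, the definition \eqref{eqn:beta function} gives $\alpha > (m-1)\rho(H) \geq (m-1)|\mu_i|$ and likewise $\alpha > (m-1)|\lambda|$, so both $\lambda + \alpha > 0$ and $(m-1)\mu_i + \alpha > 0$; hence every $j_i > 0$. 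Consequently $\rho(J) < 1$ holds if and only if $j_i < 1$ for all $i$, which, after clearing the positive denominator, is equivalent to $(m-1)\mu_i < \lambda$, i.e.\ $c_i < 0$ for all $i$, i.e.\ $C(\lambda,x)$ is negative definite. Thus negative stability of $(\lambda,x)$ is equivalent to $\rho(J(x;\alpha)) < 1$; Theorem \ref{thm:afp} then yields that $x$ is a linearly attracting fixed point of $\phi$, and for the converse the contrapositive via Theorem \ref{thm:afu} (a positive $c_i$ forces some $j_i > 1$, hence $\rho(J) > 1$ and instability) closes the equivalence.

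The concave case $\alpha < -\beta(\A)$ runs identically with $-\phi$: now $\lambda + \alpha < 0$ and $(m-1)\mu_i + \alpha < 0$, so the ratios $j_i$ remain positive, and clearing the negative denominator reverses the inequality so that $\rho(J) < 1$ becomes equivalent to $c_i > 0$ for all $i$, i.e.\ to positive stability. This is exactly the parenthetical statement of the theorem, and it highlights how the sign of $\lambda + \alpha$ is what swaps negative definiteness for positive definiteness in the classification.

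The step I expect to be delicate is the boundary behaviour where some $c_i = 0$, equivalently some $j_i = 1$ and $\rho(J) = 1$: here $C$ is semidefinite but not definite, and Theorems \ref{thm:afp} and \ref{thm:afu} say nothing about attraction. I would handle this by noting that Definition \ref{def: eigenpair classification} labels an eigenpair negative stable only when $C$ is strictly definite, so the forward implication is unaffected; for the converse I would argue that a linearly attracting fixed point forces $\rho(J) \le 1$ (the contrapositive of Theorem \ref{thm:afu}) and then appeal to the strictness already built into the definition, or else restrict to the generic situation in which $C$ is nonsingular so that the semidefinite boundary case does not arise. Apart from this degenerate case, the argument reduces to a direct eigenvalue computation, and the only real care needed is justifying the reduction to a common orthonormal basis on $x^\perp$ through the symmetry of $H$ and the identity $Hx = \lambda x$.
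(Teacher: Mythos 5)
This theorem is quoted directly from Kolda--Mayo (the paper cites it and gives no proof of its own), so the only in-paper points of comparison are the Jacobian derivation \eqref{eqn:sshopm jacobian} and the proof of proposition \ref{prop:semidef}, both of which rest on exactly the spectral decomposition you use: eigenvalue $0$ on $\mathrm{span}\{x\}$ and eigenvalues $\bigl((m-1)\mu_i+\alpha\bigr)/(\lambda+\alpha)$ on the invariant subspace $x^\perp$, with signs controlled by $\alpha > \beta(\A)$ (resp.\ $\alpha < -\beta(\A)$). Your argument is correct and is essentially the cited proof --- matching those eigenvalues of $J(x;\alpha)$ against the eigenvalues $(m-1)\mu_i - \lambda$ of $C(\lambda,x)$ and passing through theorems \ref{thm:afp} and \ref{thm:afu} --- and the $\rho(J)=1$ boundary case you flag is the same delicacy present in the original source, which your reading of ``linearly attracting'' (ruling out rate-one contraction) resolves in the same way.
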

\section{Extrapolation Method}\label{sec:extrap}
Next we introduce the extrapolated S-SHOPM method (ES-SHOPM).
We will see that fixed points of ES-SHOPM agree with fixed points of S-SHOPM,
that for an appropriate choice of parameter, a fixed-point that is
linearly attracting for the S-SHOPM as in \eqref{thm:ns} will also be linearly
attracting for ES-SHOPM, and that the extrapolation parameter can be chosen
to ensure a faster linear rate of convergence.
This method starts with a single iteration of the S-SHOPM algorithm \ref{alg:sshopm},
after which extrapolated iterate is set as a linear combination of consecutive
S-SHOPM updates $v_{j}$, $j = k,k+1$. The extrapolation parameter $\gamma$ determines
the coefficient of the linear combination. 
\begin{algorithm}[ES-SHOPM]\label{alg:es-shopm}
{Given a symmetric tensor $\A \in \R^{[m, n]}$, $\alpha \in \R$, 
$\gamma \in (-1,0]$
and $x_0 \in \R^n$ with $\|x_0\| = 1$}
\begin{algorithmic}[1]
\State{Let $\chi = 1$, if $\alpha \geq 0$; and $\chi = -1$, otherwise}
\State{Compute $v_1,x_1,\lambda_1$ with a single iteration of algorithm \ref{alg:sshopm}}
\For{k = 1, 2, \ldots} 
\State{$v_{k+1} \gets \chi(\A x_{k}^{m-1} + \alpha x_{k})$}
\State{$u_{k+1} \gets (1 - \gamma) v_{k+1} + \gamma v_{k}$}
\State{$x_{k+1} \gets u_{k+1}/\|u_{k+1}\|$}
\State{$x_{k}^\gamma \gets (1 - \gamma) x_{k} + \gamma x_{k-1}$}
\State{$\lambda_{k+1} \gets (u_{k+1}, x_{k}^\gamma) / (x_{k}^\gamma, x_{k}^\gamma)$}
\EndFor
\end{algorithmic}
\end{algorithm} 
Notice that $u_{k+1}$ defined in line 5 satisfies 
$u_{k+1} = \A (x_{k}^\gamma)^{m-1}$, where $x_k^\gamma$ is given in line 7.
Line 8 then computes the Rayleigh quotient corresponding to $x_{k}^\gamma$.

In the remainder of this section we determine, given a shifting parameter $\alpha$,
for what values of extrapolation parameter $\gamma$ ES-SHOPM accelerates
convergence, which we will see is an open subset of $(-1,0)$. 
We will also determine an optimal parameter $\gamma$, given shift $\alpha$.
In the subsequent section, we show how to set $\gamma$ dynamically, with either
constant or dynamically updated shifts. 

\subsection{Fixed-point formulation for the ES-SHOPM}\label{subsec:convextrap}
In order to construct the Jacobian for ES-SHOPM, we first write down a fixed point 
formulation for this method.
The update step for ES-SHOPM is dependent on the last two iterations. 
As such, we formulate the input to the fixed point problem as a tuple of the previous 
two iterations by 
\[
\begin{pmatrix} x_{k+1} \\ x_{k}\end{pmatrix} = \bar \phi \left( \begin{pmatrix} x_{k} 
\\ x_{k-1}\end{pmatrix} \right),
\]
where $\bar \phi$ is the function representing the extrapolated update.

Since for a bounded extrapolation parameter $\gamma$, convergence for the  
$\{x_k\}$ guarantees the convergence for the $\{ x_k ^\gamma\}$, we will formulate the 
fixed point method for the $\{x_k\}$. 
In the convex (negative stable) case the update step for $x_{k+1}$ is given by 
\begin{align}
    \label{eqn: Update for x_k ,not tuple}
    x_{k+1} & = \phi_1 (u_{k+1}) 
     = \phi_1((1 - \gamma_k)v_{k+1} + \gamma_k v_k) 
     = \phi_1 ((1- \gamma_k) \phi_2(x_k, \alpha) + \gamma_k \phi_2(x_{k-1}, \alpha )),
\end{align}
where $\phi_1$ and $\phi_2$ are defined as in \eqref{eqn:phi function}.
For the concave (positive stable) case, 
$\phi_2$ is replaced by $-\phi_2$, as in 
subsection \ref{subsec:ss-fp}, noting by the
parity of $\phi_1$ in \eqref{eqn:phi function} that 
$-\phi(x;\alpha) = \phi_1(-\phi_2(x;\alpha))$.

In order to formulate \eqref{eqn: Update for x_k ,not tuple} as a fixed-point operation,
 we think of the iterates as tuples and we 
exchange $\phi_1$ and $\phi_2$ for $\bar \phi_1$ and $\bar \phi_2$, as follows:
\begin{equation}
    \label{eqn:update for x_k tuple}
    \begin{split}
        \bar{\phi}\begin{pmatrix}
            x_{k+1} \\
            x_k
        \end{pmatrix} = \bar{\phi_1} \begin{pmatrix}
            \bar{\phi_2} \begin{pmatrix}
            \begin{pmatrix}
                x_k \\
                x_{k-1}
            \end{pmatrix};\alpha
            \end{pmatrix}
        \end{pmatrix},
    \end{split}
\end{equation}
where for $x, y \in \R^n$
\begin{equation}
    \label{eqn: phi_1 modified}
    \bar{\phi_1} \begin{pmatrix}
        \begin{pmatrix}
            x\\
            y
        \end{pmatrix}
    \end{pmatrix} = \begin{pmatrix}
        \phi_1 ( x) \\
        \phi_1 (y)
    \end{pmatrix}, 
\end{equation}
and
\begin{equation}
    \label{eqn: modified phi_2}
    \bar{\phi_2}\begin{pmatrix}
        \begin{pmatrix}
            x\\
            y
        \end{pmatrix}; \alpha
    \end{pmatrix} = \begin{pmatrix}
        (1 - \gamma_k)\phi_2 (x; \alpha) + \gamma_k \phi_2 (y; \alpha) \\
        x
    \end{pmatrix}.
\end{equation} 

Notice here that $\bar{\phi_1}$, receives the previous iterate without any changes in the second component of the tuple as this component is already normalized. Hence, we can 
write \eqref{eqn: phi_1 modified} as 
\begin{equation}
    \label{eqn: phi_1 modified final}
    \bar{\phi_1} \begin{pmatrix}
        \begin{pmatrix}
            x\\
            y
        \end{pmatrix}
    \end{pmatrix} = \begin{pmatrix}
        \phi_1 ( x) \\
        y
    \end{pmatrix}.
\end{equation}

\subsubsection{Verifying the fixed point formulation} 
Let us first look at the fixed point formulation for the S-SHOPM. 
Suppose $(\lambda, x)$ is a negative stable eigenpair. Then
\begin{equation*}
    \begin{split}
        \phi ( x; \alpha )  = \phi_1 ( \phi_2 (x; \alpha)) 
         = \phi_1 (\A x^{m-1} + \alpha x) 
         = \phi_1 ((\lambda + \alpha)x)                                              
         = \frac{(\lambda + \alpha)x}{||(\lambda + \alpha)x ||} = x. 
    \end{split}
\end{equation*}
The last equality holds
by our choice of $\alpha$ using \eqref{eqn:beta function}, to ensure 
$\lambda + \alpha > 0$. So the fixed point problem is well defined. 
Similarly, if $(\lambda,x)$ is positive stable, we exchange $\phi$ for
$-\phi$ and
$-\phi ( x; \alpha ) = -\phi_1 ((\lambda + \alpha)x)= x$, as $-(\lambda+\alpha)$ is 
positive.
Now, for the accelerated S-SHOPM, at the same eigenpair $(\lambda, x)$, using
the same value of $\alpha$ 
we have for the negative stable case 
\begin{equation*}
    \begin{split}
        \bar{\phi}\begin{pmatrix}
            \begin{pmatrix}
            x\\ x
        \end{pmatrix}; \alpha \end{pmatrix}  = \bar{\phi_1}\begin{pmatrix}
        \bar{\phi_2} \begin{pmatrix}
        \begin{pmatrix}
        x \\ x
        \end{pmatrix}; \alpha
        \end{pmatrix}
        \end{pmatrix} 
         = \bar{\phi_1} \begin{pmatrix}
        \phi_2(x ; \alpha) \\ x
        \end{pmatrix} 
         = \begin{pmatrix}
        \phi_1(\phi_2(x ; \alpha)) \\ x
        \end{pmatrix} = \begin{pmatrix}
        x \\ x
        \end{pmatrix},
    \end{split}
\end{equation*}
and similarly for the positive stable, once the necessary changes have been made.
Hence, the accelerated fixed point is also well defined. 
Further, as shown in the next proposition, the fixed points agree between the two
methods.

\begin{prop}
\label{prop: fxd pts agree}
    Let $\A \in \R^{[m,n]}$ be a symmetric tensor. Assume $\alpha,\gamma \in \R$.
For $x \in \R^n$, $\begin{pmatrix} x \\  x \end{pmatrix}$ is a fixed point of $\bar{\phi}$ of the extrapolated method as given by \eqref{eqn:update for x_k tuple} if and only if $x$ is a fixed point of S-SHOPM $\phi$ as given by \eqref{eqn:phi function}. 
\end{prop}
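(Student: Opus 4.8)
The plan is to prove the biconditional by a direct computation using the block structure of $\bar\phi$. The key observation is that the tuple formulation forces any fixed point to have identical first and second components, after which the problem collapses to the scalar S-SHOPM fixed-point equation.

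First I would establish the forward and reverse directions simultaneously by examining what $\bar\phi\begin{pmatrix} x \\ y \end{pmatrix} = \begin{pmatrix} x \\ y \end{pmatrix}$ demands. Writing out $\bar\phi = \bar\phi_1 \circ \bar\phi_2$ using \eqref{eqn: modified phi_2} and \eqref{eqn: phi_1 modified final}, the second component of $\bar\phi_2$ simply copies $x$, and $\bar\phi_1$ leaves the second component untouched. Hence the fixed-point condition on the second component reads $y = x$. So any fixed point is automatically of the form $\begin{pmatrix} x \\ x \end{pmatrix}$; this is the structural fact that makes the two methods comparable. I would state this first, as it immediately reduces the general tuple fixed-point equation to a constraint on a single vector $x$.

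Next, substituting $y = x$ into the first component, I compute $\bar\phi_2\begin{pmatrix} x \\ x \end{pmatrix} = \begin{pmatrix} (1-\gamma)\phi_2(x;\alpha) + \gamma\,\phi_2(x;\alpha) \\ x \end{pmatrix} = \begin{pmatrix} \phi_2(x;\alpha) \\ x \end{pmatrix}$, since the convex combination coefficients sum to $1$. The crucial point is that $\gamma$ drops out entirely: the extrapolation is trivial when the two input iterates coincide. Applying $\bar\phi_1$ then yields first component $\phi_1(\phi_2(x;\alpha)) = \phi(x;\alpha)$ by the definition \eqref{eqn:phi function}. Thus the first-component fixed-point equation becomes exactly $\phi(x;\alpha) = x$, which is precisely the S-SHOPM fixed-point condition. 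This establishes both implications at once: $\begin{pmatrix} x \\ x \end{pmatrix}$ is fixed by $\bar\phi$ iff $x = \phi(x;\alpha)$. For the positive stable (concave) case I would note that $\phi_2$ is replaced by $-\phi_2$ throughout, and the identical argument applies verbatim, since the convex combination still collapses and $\phi_1(-\phi_2(x;\alpha)) = -\phi(x;\alpha)$ by the parity of $\phi_1$ noted after \eqref{eqn: Update for x_k ,not tuple}.

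There is no serious obstacle here; the proposition is essentially a consistency check that the tuple lifting of the iteration does not introduce spurious fixed points. The only point requiring any care is verifying that the $\gamma$-dependence genuinely cancels, which follows from $(1-\gamma) + \gamma = 1$ regardless of the value of $\gamma$, so the result holds for all $\alpha, \gamma \in \R$ as the statement claims. I would therefore present the argument as a short chain of equalities mirroring the verification already sketched in the preceding paragraph of the excerpt, emphasizing the reduction $y = x$ as the single nontrivial step.
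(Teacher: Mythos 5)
Your proof is correct and takes essentially the same route as the paper's: the second component of the tuple map forces the two components of any fixed point to agree, the coefficients $(1-\gamma)+\gamma=1$ then collapse the first component to the S-SHOPM condition $\phi_1(\phi_2(x;\alpha))=x$, and the positive stable case is handled by replacing $\phi_2$ with $-\phi_2$ using the parity of $\phi_1$. The only organizational difference is that you extract both implications from a single computation, whereas the paper computes the forward direction explicitly and cites its preceding verification for the converse.
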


\begin{proof}Let $\begin{pmatrix} y_1 \\  y_2 \end{pmatrix}$ be a fixed point of 
$\bar{\phi}$ from \eqref{eqn:update for x_k tuple}, where $y_1, y_2 \in \R^n$. 
That is, 
for the negative stable case we have
\[\begin{pmatrix} y_1 \\ y_2 \end{pmatrix} 
= \bar{\phi} \begin{pmatrix} y_1 \\ y_2 \end{pmatrix} 
= \bar{\phi_1} \begin{pmatrix} \bar{\phi_2} 
\begin{pmatrix} y_1 \\ y_2 \end{pmatrix} \end{pmatrix}.\] 

Using \eqref{eqn: phi_1 modified final} and \eqref{eqn: modified phi_2} we have
\[\begin{pmatrix} y_1 \\ y_2 \end{pmatrix} 
= \bar{\phi_1} \begin{pmatrix} (1 - \gamma)\phi_2 (y_1) + \gamma \phi_2(y_2) 
\\ y_1  \end{pmatrix} 
= \begin{pmatrix} \phi_1((1 - \gamma)\phi_2 (y_1) + \gamma \phi_2(y_2)) 
\\ y_1 \end{pmatrix},
\]
From the second component we have
$y_2 = y_1$, by which the first component satisfies 
$y_1 = \phi_1 (\phi_2 (y_1))$, i.e., $y_1$ is a fixed point of $\phi$ from 
\eqref{eqn:update for x_k tuple}. 
For the positive stable case we exchange $\phi(x)$ for $-\phi(x) = \phi_1(-\phi_2(x))$,
and the conclusion follows.
We have already verified the converse, i.e, if $x \in \R^n$ is a fixed point of the 
function $\phi$, then 
$\begin{pmatrix} x \\ x \end{pmatrix}$ is a fixed point of the function $\bar{\phi}$.
\end{proof}

As we will numerically demonstrate in section \ref{sec:numerics}, the accelerated method
converges to the same set of eigenvalues from the same set of initial iterates, as 
compared to S-SHOPM using the same shift. Moreover, each eigenvalue is located the same
number of times from the same set of randomly generated initial iterates, 
suggesting (although we do not prove it here) that the basins of 
attraction are the same between the two methods.

\subsubsection{The Jacobian for the extrapolation method}
Next we will write the Jacobian of the extrapolated formulation 
\eqref{eqn:update for x_k tuple} at a solution
in terms of the Jacobian for the S-SHOPM evaluated at eigenvector $x$ of $\A$,
as given by \eqref{eqn:sshopm jacobian} 
\begin{lemma}\label{lem:accjac}
At eigenvector $x$ of $\A$, the Jacobian for the accelerated method with parameter 
$\gamma \in \R$ as
defined by \eqref{eqn:update for x_k tuple} is given by 
\begin{align}\label{eqn:accjac}
{J_\gamma} \begin{pmatrix} \begin{pmatrix} x \\ x\end{pmatrix}; \alpha \end{pmatrix} = \begin{pmatrix} (1 - \gamma) J(x; \alpha) & \gamma J(x;\alpha) \\ I_n & 0_n \end{pmatrix},
\end{align}
where $J(x;\alpha)$ is the Jacobian for the S-SHOPM with parameter $\alpha$, evaluated
at eigenpair $(\lambda,x)$, as given by \eqref{eqn:sshopm jacobian}.
\end{lemma}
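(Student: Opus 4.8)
The plan is to compute $J_\gamma$ directly via the chain rule applied to the composition $\bar\phi = \bar\phi_1 \circ \bar\phi_2$ in \eqref{eqn:update for x_k tuple}, exploiting the block structure of $\bar\phi_1$ and $\bar\phi_2$. Treating the input tuple $\begin{pmatrix} x \\ y\end{pmatrix} \in \R^{2n}$ as a pair of independent variables, I would first differentiate $\bar\phi_2$ from \eqref{eqn: modified phi_2}. Its first output block $(1-\gamma)\phi_2(x;\alpha) + \gamma\phi_2(y;\alpha)$ contributes $(1-\gamma)\phi_2'(x;\alpha)$ in the $x$-slot and $\gamma\phi_2'(y;\alpha)$ in the $y$-slot, while its second output block (which is simply $x$) contributes $I_n$ and $0_n$. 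This gives the $2n\times 2n$ block Jacobian $J_{\bar\phi_2} = \begin{pmatrix} (1-\gamma)\phi_2'(x;\alpha) & \gamma\phi_2'(y;\alpha) \\ I_n & 0_n\end{pmatrix}$.

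Next, from the reduced form \eqref{eqn: phi_1 modified final}, $\bar\phi_1$ applies $\phi_1$ to its first entry and leaves the second unchanged, so $J_{\bar\phi_1}\begin{pmatrix} a \\ b\end{pmatrix} = \begin{pmatrix} \phi_1'(a) & 0_n \\ 0_n & I_n\end{pmatrix}$. By the chain rule I must evaluate this block at the image point $\bar\phi_2\begin{pmatrix} x \\ x\end{pmatrix}$. Specializing to the fixed point $\begin{pmatrix} x \\ x\end{pmatrix}$ with $x$ an eigenvector, the two tuple entries coincide, so $\phi_2'(y;\alpha)$ collapses to $\phi_2'(x;\alpha)$, and $\bar\phi_2\begin{pmatrix} x \\ x\end{pmatrix} = \begin{pmatrix} \phi_2(x;\alpha) \\ x\end{pmatrix} = \begin{pmatrix} (\lambda+\alpha)x \\ x\end{pmatrix}$ by \eqref{eqn:phis at the eigenpair}. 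Hence the relevant first-block derivative is $\phi_1'(\phi_2(x;\alpha)) = (I - xx^T)/|\lambda+\alpha|$, again from \eqref{eqn:phis at the eigenpair}.

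Multiplying the two block matrices $J_{\bar\phi_1}\cdot J_{\bar\phi_2}$ then yields the claim. The top-left and top-right blocks become $\phi_1'(\phi_2(x;\alpha))\,(1-\gamma)\phi_2'(x;\alpha)$ and $\phi_1'(\phi_2(x;\alpha))\,\gamma\phi_2'(x;\alpha)$ respectively; using the factorization $J(x;\alpha) = \phi_1'(\phi_2(x;\alpha))\phi_2'(x;\alpha)$ from subsection \ref{subsec:ss-fp}, these collapse to $(1-\gamma)J(x;\alpha)$ and $\gamma J(x;\alpha)$. The bottom row is $I_n$ and $0_n$ since the second block of $\bar\phi_1$ is the identity and the second block of $\bar\phi_2$ is the projection onto $x$. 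This produces exactly \eqref{eqn:accjac}.

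The calculation is essentially routine block-matrix differentiation, so the only real care is bookkeeping: the key structural observation that makes the common factor $J(x;\alpha)$ appear in both off-diagonal blocks is that at the fixed point both tuple entries equal the same eigenvector, so $\phi_2'$ is evaluated at the identical point in each slot. The one genuine subtlety is the concave (positive stable) case, where $\phi_2$ is replaced by $-\phi_2$; here I would invoke the parity argument from subsection \ref{subsec:ss-fp}, namely $-\phi(x;\alpha) = \phi_1(-\phi_2(x;\alpha))$ together with $\lambda+\alpha<0$, so that the two sign changes cancel and $J(x;\alpha)$ reduces to the same expression \eqref{eqn:sshopm jacobian} in both cases. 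Thus \eqref{eqn:accjac} holds uniformly.
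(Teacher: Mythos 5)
Your proposal is correct and follows essentially the same route as the paper's own proof: the chain-rule decomposition of $\bar\phi = \bar\phi_1 \circ \bar\phi_2$, the same block Jacobians \eqref{eqn: modified phi_1 prime} and \eqref{eqn: modified phi_2 prime}, evaluation at the fixed point via \eqref{eqn:phis at the eigenpair}, and the final block multiplication collapsing to $(1-\gamma)J(x;\alpha)$ and $\gamma J(x;\alpha)$. Your explicit treatment of the positive stable case by the parity argument is a slightly fuller write-up of what the paper only sketches (it works the negative stable case explicitly and leaves the sign changes implicit), but it is not a different argument.
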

\begin{proof}
The proof proceeds by direct calculation, 
which we show here explicitly for the negative stable case.
Applying the chain rule for derivatives, 
the Jacobian for the accelerated method defined by \eqref{eqn:update for x_k tuple}
can be defined as 
\begin{equation}
    \label{eqn:jacobian for accl method}
     J_\gamma \begin{pmatrix}
        \begin{pmatrix}
            x_{k+1} \\
            x_k
        \end{pmatrix} ; \alpha
    \end{pmatrix}
         = \bar{\phi_1}' \begin{pmatrix}
            \bar{\phi_2} \begin{pmatrix}
            \begin{pmatrix}
                x_k \\
                x_{k-1}
            \end{pmatrix};\alpha
            \end{pmatrix}
        \end{pmatrix} \bar{\phi_2}' \begin{pmatrix}
            \begin{pmatrix}
                x_k \\
                x_{k-1}
            \end{pmatrix};\alpha
            \end{pmatrix},
\end{equation}
where $\bar\phi_1$ and $\bar \phi_2$ are given respectively by 
\eqref{eqn: phi_1 modified final} and \eqref{eqn: modified phi_2}.

For arbitrary $x,y \in \R^n$ we can use the previously calculated 
$\phi_1 '$ and $\phi_2 '$ from the \eqref{eqn: phi_1 prime and phi_2 prime} 
to obtain
\begin{equation}
    \label{eqn: modified phi_1 prime}
    \bar{\phi_1}' \begin{pmatrix}
        \begin{pmatrix}
            x \\
            y
        \end{pmatrix}
    \end{pmatrix} = \begin{pmatrix}
        \f{\partial \phi_1 (x)}{\partial x} & \f{\partial \phi_1 (x)}{\partial y} \\
        \f{\partial y}{\partial x} & \f{\partial y}{\partial y}
    \end{pmatrix} = \begin{pmatrix}
        \f{(x^Tx)I_n - xx^T}{(x^Tx)^{3/2}} & 0_n \\
        0_n & I_n 
    \end{pmatrix}, 
\end{equation} 
and 
       
\begin{align} \label{eqn: modified phi_2 prime}
\bar{\phi_2} ' \begin{pmatrix}
        \begin{pmatrix}  x \\ y \end{pmatrix}; \alpha
    \end{pmatrix}  & = \begin{pmatrix}
        \f{\partial [ (1 - \gamma_k) \phi_2(x) + \gamma_k \phi_2 (y)]}{\partial x} & \f{\partial [ (1 - \gamma_k) \phi_2(x) + \gamma_k \phi_2 (y)]}{\partial y} \\
        \f{\partial x}{\partial x} & \f{\partial x}{\partial y}
    \end{pmatrix}
\nonumber \\
    & = \begin{pmatrix}
        (1 - \gamma_k)[(m-1)\A x^{m-2} + \alpha I] & \gamma_k[(m-1)\A y^{m-2} + \alpha I] \\
        I_n & 0_n 
    \end{pmatrix}.
\end{align} 

At the eigenpair $(\lambda, x)$, the fixed point is 
$\begin{pmatrix} x \\ x \end{pmatrix}$. 
Applying \eqref{eqn:phis at the eigenpair} to the first entry yields
\begin{equation}
\label{eqn: phi_1 bar prime at phi_2 bar}
    \bar{\phi_1}' \begin{pmatrix}
    \bar{\phi_2} \begin{pmatrix}
    \begin{pmatrix}
    x \\ x
    \end{pmatrix}; \alpha
    \end{pmatrix}
    \end{pmatrix} = \begin{pmatrix}
    \phi_1 ' (\phi_2 (x; \alpha) ) & 0_n \\ 0_n & I_n  \end{pmatrix} = \begin{pmatrix}
    \frac{(I - xx^T)}{\lambda + \alpha} & 0_n \\ 0_n & I_n
    \end{pmatrix}.
\end{equation}

Putting \eqref{eqn: phi_1 bar prime at phi_2 bar} together with
\eqref{eqn:jacobian for accl method} and \eqref{eqn: modified phi_2 prime},
the Jacobian at the eigenpair becomes \begin{equation*}
    {J_\gamma} \begin{pmatrix} \begin{pmatrix} x \\ x\end{pmatrix}; \alpha \end{pmatrix} = \begin{pmatrix}  \frac{(I - xx^T)}{\lambda + \alpha} & 0_n \\ 0_n & I_n  \end{pmatrix} \begin{pmatrix}
        (1 - \gamma)[(m-1)\A x^{m-2} + \alpha I] & \gamma[(m-1)\A y^{m-2} + \alpha I] \\
        I_n & 0_n 
    \end{pmatrix}
\end{equation*} \begin{equation}
\label{eqn: Jacobian for Accl SS-HOPM}
    = \begin{pmatrix}
    (1 - \gamma) \f{(m-1)(\A x^{m-2} - \lambda x x^T) + \alpha(I - xx^T)}{\lambda + \alpha} & \gamma \f{(m-1)(\A x^{m-2} - \lambda x x^T) + \alpha(I - xx^T)}{\lambda + \alpha} \\ I_n & 0_n 
    \end{pmatrix}.
\end{equation} 

The first component of equation \eqref{eqn: Jacobian for Accl SS-HOPM} agrees with the 
Jacobian for the S-SHOPM when the parameter is $\gamma= 0$, which is to be expected. 
Moreover we can write the first to
entries of \eqref{eqn: Jacobian for Accl SS-HOPM} in terms of $J(x;\alpha)$, 
the Jacobian of the S-SHOPM from \eqref{eqn:sshopm jacobian}, yielding  
the result \eqref{eqn:accjac}.
\end{proof}

In the next section, we will use this result to determine asymptotic rates of 
convergence for algorithm \ref{alg:es-shopm} in terms of the convergence rate for 
algorithm \ref{alg:sshopm}
\subsection{Accelerated rates of convergence}
We next characterize the rate of convergence of the accelerated method
by considering the spectral radius of its Jacobian from the fixed-point formulation
\eqref{eqn:update for x_k tuple}, together with 
theorems \ref{thm:afp} and \ref{thm:afu}. 
To establish convergence we require the spectral radius of \eqref{eqn:accjac} is 
less than one, and to establish acceleration we require the spectral radius 
to be less than that of \eqref{eqn:sshopm jacobian}, the Jacobian for the S-SHOPM.
We will focus on the latter.

We will make use of the following proposition that allows us to write down eigenpairs 
of matrices of the form \eqref{eqn:accjac}.
\begin{prop}\label{prop:augpairs}
Let $(z, \mu)$, be an eigenpair of $n \times n$ matrix $J$. 
Then for arbitrary $\gamma \in \R$, the pairs 
$(v_i,a_i), ~ i = 1,2$ are eigenpairs of the augmented matrix $J_\gamma$, where
\begin{align}\label{eqn:augpairs}
J_\gamma = \begin{pmatrix} (1-\gamma)J & \gamma J \\ I & 0 \end{pmatrix}, 
~v_i = \begin{pmatrix} a_i z \\ z \end{pmatrix}, 
~\text{ and }
a_i = \frac{(1 - \gamma)\mu \pm \sqrt{((1 - \gamma)\mu)^2 + 4\gamma \mu}}{2}.
\end{align}
\end{prop}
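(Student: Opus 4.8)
The plan is to prove this by direct verification, exploiting the companion-matrix block structure of $J_\gamma$. I would posit an eigenvector of the form $v = \begin{pmatrix} a z \\ z \end{pmatrix}$ for an as-yet-undetermined scalar $a \in \R$, where $z$ is the given eigenvector of $J$. This ansatz is natural: the bottom block row of $J_\gamma$ is $\begin{pmatrix} I & 0 \end{pmatrix}$, so any eigenvector $\begin{pmatrix} p \\ q \end{pmatrix}$ with eigenvalue $a$ must satisfy $p = a q$, which forces the top block to be a scalar multiple of the bottom. Hence the correct form of the eigenvector is essentially dictated by the structure, and the only freedom is in the scalar $a$, which will turn out to be the eigenvalue.

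Next I would apply $J_\gamma$ to this ansatz using block multiplication together with the eigen-relation $Jz = \mu z$. The second block row returns $a z$ immediately, so the eigenvalue equation $J_\gamma v = a v$ is automatically consistent there. The first block row gives $(1-\gamma) J(a z) + \gamma J z = \bigl((1-\gamma)a + \gamma\bigr)\mu z$, and requiring this to equal $a^2 z$, the top block of $a v$, reduces the entire eigenvalue condition to the single scalar equation $a^2 - (1-\gamma)\mu\, a - \gamma\mu = 0$ after cancelling the nonzero vector $z$.

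Finally, I would solve this quadratic with the quadratic formula, obtaining exactly the two roots $a_{1,2} = \tfrac{1}{2}\bigl((1-\gamma)\mu \pm \sqrt{((1-\gamma)\mu)^2 + 4\gamma\mu}\bigr)$ claimed in \eqref{eqn:augpairs}. Substituting either root back into the ansatz then certifies that $(v_i, a_i)$ solves $J_\gamma v_i = a_i v_i$, so each is a genuine eigenpair.

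There is no serious obstacle here, as the result is a direct computation. The only points requiring care are (i) confirming that $v_i \neq 0$, which holds because $z \neq 0$ as an eigenvector, so that even when a root $a_i = 0$ occurs (i.e. when $\gamma\mu = 0$) the vector $\begin{pmatrix} 0 \\ z \end{pmatrix}$ is still nonzero; and (ii) noting that the proposition only asserts that these pairs are eigenpairs, not that they exhaust the spectrum of $J_\gamma$, so I need not account for the remaining eigenvalues, nor worry about the degenerate case where the discriminant vanishes and the two listed eigenpairs coincide.
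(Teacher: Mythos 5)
Your proof is correct and follows essentially the same route as the paper: block multiplication of $J_\gamma$ against the ansatz $v = \begin{pmatrix} a z \\ z \end{pmatrix}$, reduction to the scalar quadratic $a^2 - (1-\gamma)\mu a - \gamma\mu = 0$, and the quadratic formula. If anything, your version is slightly cleaner, since by equating the top block directly with $a^2 z$ you avoid the division by $a_i$ that forces the paper to treat $a_i = 0$ as a separate (trivial) case.
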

\begin{proof}
For $a_i = 0$ multiplying through trivially yields $J v_i = a_i v_i$. 
Otherwise, multiplying through we have
\begin{align*}
J v_i = 
\begin{pmatrix} (1-\gamma)J & \gamma J \\ I & 0 \end{pmatrix} 
\begin{pmatrix} a_i z \\ z \end{pmatrix} 
= \begin{pmatrix} (a_i(1 - \gamma)\mu + \gamma \mu) z \\ a_i z \end{pmatrix} 
= a_i \begin{pmatrix} ((1 - \gamma)\mu + \frac{\gamma \mu}{a_i}) z \\ z \end{pmatrix}.
\end{align*}
 
Then $(v_i,a_i)$ is an eigenpair of $J_\gamma$ when $a_i$ satisfies 
 $(1 - \gamma)\mu + \frac{\gamma \mu}{a_i} = a_i$.
Rearranging terms yields the quadratic equation 
$ a_i^2 - (1 - \gamma)\mu a_i - \gamma \mu = 0$, with solutions given 
by $a_i$ in \eqref{eqn:augpairs}.
\end{proof}

The next key element of our main theorem on acceleration is that the Jacobian 
for S-SHOPM at a positive or negative stable fixed point is positive semi-definite.
\begin{prop}\label{prop:semidef}
Assume the hypotheses of theorem \ref{thm:ns}. 
Then $J(x;\alpha)$ as given by \eqref{eqn:sshopm jacobian}
is positive semi-definite.
\end{prop}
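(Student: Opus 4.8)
The plan is to exploit that $J(x;\alpha)$ in \eqref{eqn:sshopm jacobian} is a \emph{symmetric} matrix, since each of $\A x^{m-2}$, $xx^T$ and $I$ is symmetric; consequently positive semi-definiteness can be read off from a congruence factorization. Writing $P = I - xx^T$ for the orthogonal projector onto $x_\ast^\perp$ and recalling $\phi_2'(x;\alpha) = (m-1)\A x^{m-2} + \alpha I$ from \eqref{eqn: phi_1 prime and phi_2 prime}, I would first establish the identity
\[
J(x;\alpha) = \f{1}{\lambda+\alpha}\,P\,\phi_2'(x;\alpha)\,P .
\]
This follows by direct expansion: because $\A x^{m-1} = \lambda x$ and $x^Tx = 1$, the vector $x$ is an eigenvector of $\phi_2'(x;\alpha)$ with eigenvalue $(m-1)\lambda + \alpha$, so that $P\,\phi_2'(x;\alpha)\,P = \phi_2'(x;\alpha) - ((m-1)\lambda+\alpha)xx^T$, which is exactly the numerator of \eqref{eqn:sshopm jacobian}.

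With the factorization in hand, the claim reduces to two sign facts. First I would argue that $\phi_2'(x;\alpha)$ is positive definite in the convex (negative stable) case and negative definite in the concave (positive stable) case: by the definition \eqref{eqn:beta function} of $\beta(\A)$ the eigenvalues of $(m-1)\A x^{m-2}$ lie in $[-\beta(\A),\beta(\A)]$, so for $\alpha > \beta(\A)$ every eigenvalue $\nu + \alpha$ of $\phi_2'(x;\alpha)$ is positive, while for $\alpha < -\beta(\A)$ every one is negative. Second, the sign of $\lambda+\alpha$ is already fixed in subsection \ref{subsec:ss-fp}: positive in the convex case and negative in the concave case. In the convex case $P\,\phi_2'(x;\alpha)\,P$ is then positive semi-definite, since $z^T P\,\phi_2'(x;\alpha)\,P z = (Pz)^T\phi_2'(x;\alpha)(Pz) \ge 0$, and dividing by $\lambda+\alpha>0$ preserves this; in the concave case $P\,\phi_2'(x;\alpha)\,P$ is negative semi-definite, and division by $\lambda+\alpha<0$ flips the sign, again giving $J(x;\alpha) \succeq 0$.

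I expect the main obstacle to be the sign bookkeeping across the two cases: one must check that the definiteness of $\phi_2'(x;\alpha)$ and the sign of $\lambda+\alpha$ flip \emph{together}, so that their quotient is positive semi-definite in both. Should the congruence identity prove awkward to write out, an equivalent route is to diagonalize $J$ directly: it annihilates $x$ (contributing a zero eigenvalue), while on $x_\ast^\perp$ its eigenvalues are $((m-1)\mu_i + \alpha)/(\lambda+\alpha)$, where the $\mu_i$ are the eigenvalues of $\A x^{m-2}$ restricted to $x_\ast^\perp$; the same bound $(m-1)|\mu_i| \le \beta(\A) < |\alpha|$ shows these are nonnegative. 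This second route also ties the argument to the classification matrix $C(\lambda,x)$, since $U_\ast^T\phi_2'(x;\alpha)U_\ast = C(\lambda,x) + (\lambda+\alpha)I_{n-1}$.
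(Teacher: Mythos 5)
Your proposal is correct, and while it rests on the same two estimates as the paper's proof, it is organized around a different key identity. The paper argues directly on the quadratic form: since $J(x;\alpha)$ is symmetric and $x^TJ(x;\alpha)x=0$, it evaluates $y^TJ(x;\alpha)y$ for unit $y$ with $y^Tx=0$, obtaining $\bigl(y^T(m-1)\A x^{m-2}y+\alpha\bigr)/(\lambda+\alpha)$, and then invokes exactly the two sign facts you isolate, namely $|(m-1)y^T\A x^{m-2}y|\le\beta(\A)<|\alpha|$ and the agreement of the sign of $\lambda+\alpha$ with that of $\alpha$ in both the convex and concave cases. Your congruence factorization $J(x;\alpha)=\f{1}{\lambda+\alpha}\,P\,\phi_2'(x;\alpha)\,P$ is valid: since $\A x^{m-2}$ is symmetric and $\A x^{m-2}x=\A x^{m-1}=\lambda x$, one has $\phi_2'(x;\alpha)x=((m-1)\lambda+\alpha)x$, and expanding $P\phi_2'(x;\alpha)P$ reproduces the numerator of \eqref{eqn:sshopm jacobian}. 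This repackaging buys something concrete: it gives $z^TJ(x;\alpha)z=(Pz)^T\phi_2'(x;\alpha)(Pz)/(\lambda+\alpha)\ge 0$ for \emph{every} $z\in\R^n$ in one stroke, whereas the paper's argument, as written, checks the form only on $x$ and on vectors orthogonal to $x$; that check suffices only because $x\in\ker J(x;\alpha)$, so that cross terms vanish --- a fact the paper uses implicitly but never states, and which your projector makes explicit. Your alternative route, using the invariance of $x^\perp$ under the symmetric matrix $\A x^{m-2}$ to read off the eigenvalues $((m-1)\mu_i+\alpha)/(\lambda+\alpha)$ of $J(x;\alpha)$, is essentially the paper's argument recast in eigenvalue language, and the identity $U_*^T\phi_2'(x;\alpha)U_*=C(\lambda,x)+(\lambda+\alpha)I_{n-1}$ correctly ties both to the stability classification of definition \ref{def: eigenpair classification}.
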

\begin{proof}
The proof follows that of 
\cite[theorem 4.8]{Kolda_Mayo_2011}, given for the negative stable case. 
Let $J(x;\alpha)$ be 
given by \eqref{eqn:sshopm jacobian}, the Jacobian of \eqref{eqn:phi function} 
at eigenpair $(\lambda,x)$. Matrix $J(x;\alpha)$ is symmetric, so it suffices to show
that $y^TJ(x;\alpha)y \ge 0$ for $y \in \Sigma$.  Since $x^T J(x;\alpha)x = 0$, 
consider $y^T J(x;\alpha)y > 0$ for $y\in \Sigma$ with $y^T x = 0$,
for which 
\begin{align}\label{eqn:psd001}
y^T J(x;\alpha) y  =  \frac{y^T((m-1) \A x^{m-2})y + \alpha}{\lambda + \alpha}.
\end{align}
For the positive stable case, 
$|y^T(m-1) \A x^{m-2}y| <(m-1) \rho( \A x^{m-2}) \le \beta(\A)$ and 
$\alpha > \beta(\A) \ge |\lambda|$, by the definition of $\beta$ in 
\eqref{eqn:beta function}. Putting these inequalities together yields
\[
y^TJ(x;\alpha)y 
\ge \frac{-(m-1)\rho(\A x^{m-2}) + \alpha}{\lambda + \alpha}
\ge \frac{-\beta(\A) + \alpha}{\lambda + \alpha} > 0.
\]
For the positive stable case, 
$\alpha < -\beta(\A) < 0$ still dominates $y^T(m-1)\A x^{m-2}y$
in the numerator of \eqref{eqn:psd001}, and $\lambda + \alpha < 0$ in the 
denominator so that $y^T J(x;\alpha)y > 0$.
\end{proof}

Now we can use the result of proposition \ref{prop:augpairs} to characterize 
the eigenpairs of \eqref{eqn:accjac}, the Jacobian for the ES-SHOPM, 
with respect to the eigenpairs of \eqref{eqn:sshopm jacobian}, the Jacobian for 
the S-SHOPM.
\begin{theorem}\label{thm:acc}
Assume the hypotheses of theorem \ref{thm:ns}. 
let $J_\gamma \in R^{2n \times 2n}$ be given as in 
\eqref{eqn:augpairs}, where $J = J(x;\alpha)$, the Jacobian for the S-SHOPM at $x$
is given by \eqref{eqn:sshopm jacobian}.
Let $\rho$ be the spectral radius of $J$. Then for $\gamma \in [\gamma_{opt},0]$,
$\rho_\gamma$ the spectral radius of $J_\gamma$ is given by
\begin{align}\label{eqn:specJgam}
\rho_\gamma = \frac{(1 - \gamma)\rho + \sqrt{((1 - \gamma)\rho)^2 + 4\gamma \rho}}{2},
\end{align}
where $\gamma_{opt}$, the value of  $\gamma$ that minimizes the spectral radius of 
$J_\gamma$ is given by
\begin{align}\label{eqn:gammaopt}
\gamma_{opt} = \frac{(\rho - 2) + 2\sqrt{1 - \rho}}{\rho},
\end{align} 
yielding a spectral radius of 
$\rho_{opt}= 1-\sqrt{1-\rho}$.
\end{theorem}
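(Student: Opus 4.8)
The plan is to reduce the spectral-radius computation for the $2n \times 2n$ augmented Jacobian $J_\gamma$ to a scalar analysis over the eigenvalues of the $n \times n$ matrix $J = J(x;\alpha)$, exploiting Proposition \ref{prop:semidef} (so that $J$ is positive semi-definite and its spectrum lies in $[0,\rho]$, with $\rho$ the largest eigenvalue) together with Proposition \ref{prop:augpairs} (which maps each eigenvalue $\mu$ of $J$ to the pair $a_\pm(\mu) = \tfrac12[(1-\gamma)\mu \pm \sqrt{((1-\gamma)\mu)^2 + 4\gamma\mu}]$ of eigenvalues of $J_\gamma$). First I would introduce the discriminant $D(\mu) = ((1-\gamma)\mu)^2 + 4\gamma\mu = \mu[(1-\gamma)^2\mu + 4\gamma]$ and note that, for $\gamma \le 0$, it is negative for $0 < \mu < \mu_c$ and nonnegative for $\mu \ge \mu_c$, where $\mu_c = -4\gamma/(1-\gamma)^2$. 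This splits the spectrum of $J$ into a complex regime and a real regime.

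Next I would compute the magnitude $g(\mu) := \max_\pm |a_\pm(\mu)|$ on each regime. In the complex regime the two roots are conjugate, so $|a_\pm|^2$ equals the product of the roots, namely $-\gamma\mu$, giving $g(\mu) = \sqrt{-\gamma\mu}$, which is increasing in $\mu$. In the real regime the product $-\gamma\mu \ge 0$ and sum $(1-\gamma)\mu \ge 0$ force both roots nonnegative, so the larger is $a_+(\mu)$; since $D$ is increasing on $[\mu_c,\infty)$ (its vertex sits at $\mu_c/2$), $a_+(\mu)$ is increasing there as well. The two expressions agree at $\mu_c$, where $D = 0$ and $a_+(\mu_c) = (1-\gamma)\mu_c/2 = -2\gamma/(1-\gamma) = \sqrt{-\gamma\mu_c}$, so $g$ is continuous and increasing on all of $[0,\rho]$. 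Consequently $\rho_\gamma = g(\rho)$ is governed entirely by the largest eigenvalue $\rho$ of $J$, which is the crux of the reduction.

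To obtain \eqref{eqn:specJgam} on $[\gamma_{opt},0]$, I would show $\mu = \rho$ lies in the real regime for these $\gamma$. Since $\mu_c(\gamma)$ has derivative $-4(1+\gamma)/(1-\gamma)^3 < 0$ on $(-1,0)$, it is decreasing, and $\mu_c(\gamma_{opt}) = \rho$ because $\gamma_{opt}$ in \eqref{eqn:gammaopt} is exactly the root in $(-1,0)$ of $D(\rho) = 0$, i.e.\ of $\rho\gamma^2 + (4-2\rho)\gamma + \rho = 0$. Hence $\mu_c \le \rho$ for $\gamma \in [\gamma_{opt},0]$, so $\rho_\gamma = a_+(\rho)$, which is \eqref{eqn:specJgam}. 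For the minimization I would differentiate this real-branch formula and reduce the sign of $\rho_\gamma'(\gamma)$ to that of $(2 - \rho + \gamma\rho) - \sqrt{h}$, where $h = ((1-\gamma)\rho)^2 + 4\gamma\rho$; the clean identity $(2-\rho+\gamma\rho)^2 - h = 4(1-\rho) > 0$ (verified by direct expansion, with all $\gamma$-dependence cancelling) shows $\rho_\gamma$ is strictly increasing on $[\gamma_{opt},0]$. For $\gamma < \gamma_{opt}$, $\mu = \rho$ falls in the complex regime and $\rho_\gamma = \sqrt{-\gamma\rho}$, which is decreasing in $\gamma$; the branches meet continuously at $\gamma_{opt}$, so $\gamma_{opt}$ is the global minimizer. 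Evaluating at $\gamma_{opt}$, where $D(\rho) = 0$, gives $\rho_{opt} = (1-\gamma_{opt})\rho/2$, and substituting $1 - \gamma_{opt} = 2(1-\sqrt{1-\rho})/\rho$ yields $\rho_{opt} = 1 - \sqrt{1-\rho}$.

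The main obstacle I expect is the second step: rigorously establishing that the magnitude map $\mu \mapsto g(\mu)$ is monotone increasing across the complex/real boundary, so that the spectral radius of $J_\gamma$ is controlled by $\mu = \rho$ alone rather than by some intermediate eigenvalue of $J$. This demands care at the transition point $\mu_c$ (continuity of $g$, and monotonicity of $a_+$ via monotonicity of $D$ on $[\mu_c,\infty)$). By contrast, the optimization in $\gamma$, though it appears formidable, collapses immediately thanks to the identity $(2-\rho+\gamma\rho)^2 - h = 4(1-\rho)$, which makes the monotonicity of each branch transparent.
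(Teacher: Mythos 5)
Your proposal is correct, and its skeleton matches the paper's: use proposition \ref{prop:semidef} to place the spectrum of $J$ in $[0,\rho]$, use proposition \ref{prop:augpairs} to map each eigenvalue $\mu$ of $J$ to the root pair $a_\pm(\mu)$, locate $\gamma_{opt}$ as the negative root of the discriminant at $\mu=\rho$, and minimize over $\gamma$ separately on the real and complex branches. Where you genuinely depart from the paper is in how the two delicate steps are justified, and in both cases your argument is more complete. First, to conclude that the spectral radius of $J_\gamma$ is produced by the top eigenvalue $\rho$ of $J$, the paper linearizes the square root in \eqref{eqn:acc001} (see \eqref{eqn:acc002}) and argues that the largest eigenvalue of $J$ is ``perturbed the least'' --- a first-order argument valid only for small $|\gamma|$, which is then asserted to hold on all of $[\gamma_{opt},0]$. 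Your lemma that $g(\mu)=\max_\pm|a_\pm(\mu)|$ is continuous and increasing on $[0,\rho]$, proved on each regime and matched at $\mu_c=-4\gamma/(1-\gamma)^2$, gives the reduction uniformly for every $\gamma\in(-1,0]$ rather than asymptotically, which is exactly what the theorem's claim on the whole interval requires. Second, for the minimization in $\gamma$ on the real branch, the paper states the minimizer is ``clearly'' the left endpoint; this is not obvious, since as $\gamma$ increases the two terms $(1-\gamma)\rho$ and $\sqrt{h}$ in $\rho_\gamma$ move in opposite directions. Your identity $(2-\rho+\gamma\rho)^2-h=4(1-\rho)$ (which checks out by direct expansion) settles the monotonicity cleanly. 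Two small points to patch: to pass from the squared identity to $\rho_\gamma'>0$ you also need $2-\rho+\gamma\rho>0$, which is immediate since $(1-\gamma)\rho<2\rho<2$ for $\gamma>-1$ and $\rho<1$; and, like the paper, you implicitly assume that the $2n$ numbers $a_\pm(\mu_i)$ exhaust the spectrum of $J_\gamma$ --- proposition \ref{prop:augpairs} alone only gives containment, but since $J$ is symmetric one has
\begin{equation*}
\det(J_\gamma - aI)=\prod_{i=1}^{n}\bigl(a^2-(1-\gamma)\mu_i a-\gamma\mu_i\bigr),
\end{equation*}
so a one-line remark closes this gap in both your argument and the paper's.
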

\begin{proof}
Under the given hypotheses, the spectral radius $\rho$ of $J$ satisfies $\rho \in (0,1)$.
From proposition \ref{prop:semidef}, the spectral radius $\rho$ is the maximum 
eigenvalue of $J$.  
From proposition \ref{prop:augpairs}, 
$a_i = ( (1 - \gamma)\rho \pm \sqrt{((1 - \gamma)\rho)^2 + 4\gamma \rho})/2$, 
$i = 1,2$,
are eigenvalues of $J_\gamma$.
As $\gamma$ is perturbed away from zero, the first root 
\begin{align}\label{eqn:acc001}
a_1 = \frac{(1 - \gamma)\rho + \sqrt{((1 - \gamma)\rho)^2 + 4\gamma \rho}}{2}
\end{align}
is perturbed away from $\rho$, whereas the second root $a_2$ is perturbed away from 
zero. 
To understand why we are interested in small negative values of $\gamma$, we may
consider a linear approximation of the square root term in \eqref{eqn:acc001} 
by writing
\begin{align}\label{eqn:acc002}
a_1 = \frac{\rho}{2}\left( (1 - \gamma)+ (1-\gamma)\sqrt{1 + \f{4 \gamma}{(1+\gamma)^2 \rho}}
\right)
\approx (1-\gamma)\rho + \f{\gamma}{1-\gamma}.
\end{align}
From \eqref{eqn:acc002} it is clear that as $\gamma$ is perturbed away from zero,
the spectral radius of $J_\gamma$ increases for small values of $\gamma > 0$ 
and decreases for $\gamma < 0$. 
As the same argument holds for each eigenvalue $\mu$ of $J$, we can 
see that the largest eigenvalue of $J$ is perturbed the least, so that $a_1$ as given by
\eqref{eqn:acc001} gives the spectral radius $\rho_\gamma$ of $J_\gamma$, 
for small enough negative values of $\gamma$.  

Next, we consider the range of values of $\gamma < 0$
for which the discriminant of \eqref{eqn:acc001} is non-negative to determine the
range of extrapolation parameters for which the iteration is non-oscillatory. 
The negative value of $\gamma$ for which the discriminant of \eqref{eqn:acc001}
is equal to zero is
\begin{align}\label{eqn:acc003}
\gamma_\ast = \f{(\rho -2) + 2\sqrt{1-\rho}}{\rho} 
\in (-1,0) ~\text{ for }~ \rho \in (0,1).
\end{align}
So far, this establishes $J_\gamma$ has a spectral radius $\rho_\gamma$ 
given by \eqref{eqn:specJgam} for $\gamma \in [\gamma_{\ast},0]$. 
The value of $\gamma \in [\gamma_\ast,0]$ that minimizes $\rho_\gamma$ for
$\gamma \in [\gamma_{\ast},0]$ is clearly $\gamma_\ast$, and evaluating
$\rho_\gamma$ at $\gamma = \gamma_\ast$ yields $\rho_\ast = 1-\sqrt{1-\rho}$.
For $\gamma < \gamma_{\ast}$, the discriminant of \eqref{eqn:acc001} is negative 
so that $|\rho_\gamma|^2 = -\gamma \rho$. Evaluating $-\gamma \rho$ at
$\gamma = \gamma_{\ast}$ yields $\rho_\ast^2$. Since $-\gamma \rho$ is decreasing
with respect to $\gamma$ for $\rho > 0$ we see that $\gamma_{\ast}$ minimizes 
$\rho_\gamma$, hence $\gamma_{opt} = \gamma_\ast$  in \eqref{eqn:gammaopt}
and $\rho_{opt} = \rho_\ast$, establishing the result.
\end{proof}
The results of theorem \eqref{thm:acc}, including the minimizers $\gamma_{opt}$ and
$\rho_{opt}$ as well as the form of the curve $\rho_\gamma$ as a function of $\gamma$
to the left and right of the minimizer are numerically verified in subsection 
\ref{subsec:demorates} for both an odd and even order example. 

\section{Dynamic parameter selection}\label{sec:dynamic}
Global convergence of S-SHOPM depends on a sufficient shift $\alpha$. However, 
choosing shifts too large in magnitude slows convergence. From \eqref{eqn:psd001}, 
$\alpha > \beta(\A)$ $(\alpha < -\beta(\A))$ ensures the spectral radius of the 
Jacobian at a negative (positive) stable eigenpair is positive, but as 
$\alpha \rightarrow \infty$, the spectral radius approaches unity.
A good choice of shift $\alpha$ is important for the extrapolated version of the 
algorithm  as well since its rate of convergence $\rho (J_\gamma)$ is a function 
of the spectral radius of the S-SHOPM Jacobian $J(x;\alpha)$.  
As it is difficult to determine an appropriate value of the shift parameter without
a priori knowledge of the spectrum, in \cite{Kolda_Mayo_2014} Kolda and Mayo proposed a 
method which adaptively updates the shift $\alpha_k$ to satisfy a local convexity
(or concavity) condition at each iteration.
This method is called the generalized eigenproblem adaptive power (GEAP) method. 

The GEAP method can be used on a more general class of eigenproblems, however we 
restrict our attention for the present to $Z$-eigenproblems. 
Algorithm \ref{alg:geap}  \cite[Algorithm 2]{Kolda_Mayo_2014} demonstrates the 
method for adaptively choosing the shift $\alpha_k$ at each iteration to ensure
the negative or positive definiteness of the Hessian of the shifted 
objective function evaluated at each iterate. 
\begin{algorithm}{$Z$-eigenpair Adaptive S-SHOPM (GEAP)} \label{alg:geap}
{Given a symmetric tensor $\A \in \R^{[m, n]}$, tolerance $\tau>0$, and
            $x_0 \in \R^n$ with $\|x_0\| = 1$}
    \begin{algorithmic}[1]
    \State{To enforce convexity, let $\chi = 1$ and for concavity, let $\chi = -1$.}
    \For{k $= 0, 1,\ldots$}
    \State{Precompute $\A x_k^{m-2}$, $\A x_k^{m-1}$, $\A x_k^{m}$}
    \State{$\lambda_k \gets \A x_k^m$}
   \State{$\alpha_k \gets \chi \max \{ 0, (\tau - \lambda_{min} (\chi m (m-1) \A x_k^{m-2}))/m \}$}
    \State{${v}_{k+1} \gets \chi(\A x_k^{m-1} + \alpha_k x_k)$}
    \State{$x_{k+1} \gets {v}_{k+1} / || {v}_{k+1} ||$}
    \EndFor
    \end{algorithmic}
\end{algorithm}

Similarly to the choice of shift, it seems unclear how to set the optimal extrapolation 
parameter $\gamma_{opt}$
given by \eqref{eqn:gammaopt} without a priori knowledge of the S-SHOPM convergence
rate, even with knowledge of the spectrum.  Fortunately, this (or a close approximation
thereof) is an observable quantity based on the either the residual convergence rate of 
S-SHOPM or more accurately by the easily computable spectral radius of its Jacobian
at each iteration.
This is how we determine parameter $\gamma_k$ at each step in the  
dynamic extrapolation for the S-SHOPM (DES-SHOPM), using the largest eigenvalue
of $J(x_{k+1}\alpha)$ in place of $\rho(x;\alpha)$ in \eqref{eqn:gammaopt}, at each 
iteration. 
Algorithm \ref{alg:de s-shopm} gives the DES-SHOPM algorithm for a dynamic extrapolation
parameter with a static choice of shift $\alpha$. 

\begin{algorithm}{(DES-SHOPM)} {Given a symmetric tensor $\A \in \R^{[m, n]}$, $\alpha \in \R$
and $x_0 \in \R^n$ with $\|x_0\| = 1$}
    \label{alg:de s-shopm}
    \begin{algorithmic}[1]
\State{Let $\chi = 1$, if $\alpha \geq 0$; and $\chi = -1$, otherwise}
\State{Compute $v_1,x_1,\lambda_1$ with a single iteration of algorithm \ref{alg:sshopm}}
\For{k = 1, 2, \ldots} 
\State{Precompute $\A x_k^{m-2}$, $\A x_k^{m-1}$, $\A x_k^{m}$}
\State{$v_{k+1} \gets \chi(\A x_{k}^{m-1} + \alpha x_{k})$}
\State{$ J_{k+1} \gets ((m-1)(\A x_{k}^{m-2} - \lambda x_{k} x_{k}^T) + \alpha(I - x_{k}x_{k}^T))/(\lambda_{k} + \alpha)$}
   \State{$\gamma_{k+1} \gets (\lambda_{max} (J_{k+1}) - 2 + 2 \re(\sqrt{1- \lambda_{max} (J_{k+1})}))/\lambda_{max} (J_{k+1})$}
\State{$u_{k+1} \gets (1 - \gamma_{k+1}) v_{k+1} + \gamma_{k+1} v_{k}$}
\State{$x_{k+1} \gets u_{k+1}/\|u_{k+1}\|$}
\State{$x_{k}^\gamma \gets (1 - \gamma_{k+1}) x_{k} + \gamma_{k+1} x_{k-1}$}
\State{$\lambda_{k+1} \gets (u_{k+1}, x_{k}^\gamma) / (x_{k}^\gamma, x_{k}^\gamma)$}
\EndFor
    \end{algorithmic}
\end{algorithm}
\begin{remark}\label{rem:DES-SHOPM rho correction}
If $\alpha$ is not strictly chosen greater than $\beta(\A)$ given by 
\eqref{eqn:beta function}, it is possible that the spectral radius of $J(x_{k},\alpha)$
exceeds unity at certain iterations (most often in the preasymptotic regime). 
To ensure the iteration is well defined under these circumstances, the assignment
of $\gamma_k$ by the local approximation of \eqref{eqn:gammaopt} is given as
$\gamma_{k+1} = (\rho(J_{k+1}) -2 + 2\re \sqrt{1 - \rho(J_{k+1}) } )/\rho(J_{k+1})$ 
in line 7 of DES-SHOPM, where $J_{k+1} = J(x_{k+1};\alpha)$ and 
$\re$ denotes the real part.
\end{remark}
The process for approximating the optimal $\gamma$ by $\gamma_k$ does not increase the 
number of tensor vector products as computing $\A x_{k}^{m-2}$ is an intermediate step
to the computation of $\A x_k^{m-1}$, likewise used to compute $\A x_k^m.$ 
However, similarly to the GEAP algorithm \ref{alg:geap} we require information on 
the spectrum, and in this case the largest magnitude eigenvalue, of a matrix of size 
$n \times n$. 

Our last algorithm combines the adaptive shift from the GEAP algorithm \ref{alg:geap}
with the dynamic choice of extrapolation from the DES-SHOPM algorithm 
\ref{alg:de s-shopm}.
The dynamic extrapolation for the GEAP algorithm (DE-GEAP) is defined by the
following modifications on the DES-SHOPM algorithm \ref{alg:de s-shopm}.
\begin{algorithm}[DE-GEAP]\label{alg:degeap}
{Given a symmetric tensor $\A \in \R^{[m, n]}$, tolerance $\tau>0$, and
            $x_0 \in \R^n$ with $\|x_0\| = 1$, run algorithm \ref{alg:de s-shopm}
with the following modifications.}
\begin{itemize}
\item Replace line 2 in DES-SHOPM algorithm \ref{alg:de s-shopm} with
\begin{algorithmic}[0]
\State{Compute $v_1, x_1, \lambda_1, \alpha_1$ with a single iteration of 
algorithm \ref{alg:geap}}
\end{algorithmic}
\item After line 11 in DES-SHOPM algorithm \ref{alg:de s-shopm} 
\begin{algorithmic}
\State{Compute $\alpha_k$ by line 5 of algorithm \ref{alg:geap}}
\end{algorithmic}
\end{itemize}
\end{algorithm}
In section \ref{sec:numerics}, we will observe that the dynamic parameter selection 
methods outperform all the static methods, and in each test the best performance is
given by the adaptive shift plus dynamic extrapolation of the DE-GEAP algorithm
\ref{alg:degeap}.

\section{Numerical Results}\label{sec:numerics}
In the following numerical tests we demonstrate the efficiency of the 
ES-SHOPM algorithm \ref{alg:es-shopm}, DES-SHOPM algorithm \ref{alg:de s-shopm} and
DE-GEAP algorithm \ref{alg:degeap} in comparison to their base iteration counterparts
the S-SHOPM algorithm \ref{alg:sshopm} and GEAP algorithm \ref{alg:geap}.
We consider three benchmark examples from the literature, the first two from 
\cite{Kolda_Mayo_2011} considering both odd and
even order tensors, and both convex and nonconvex cases; the third example is a larger
tensor used as an example in \cite{Cipolla_Redivo-Zaglia_Tudisco_2020_lp_extrap}.  
In subsection \ref{subsec:demorates} we
numerically verify the acceleration provided by the theoretical convergence rates of
theorem \ref{thm:acc} for the ES-SHOPM algorithm \ref{alg:es-shopm}.

For the numerical experiments, we use the \textit{Tensor Toolbox} \cite{TensorToolbox} on MATLAB version R2023a. The experiments were performed on a laptop with 12th Gen Intel Core i7-12700H (2.30 GHz) and 16.0 GB of RAM. In each of the experiments, we initialize the method by choosing a random starting point from a uniform distribution on $[-1,1]^n$. We also use a stopping criterion of $|\lambda_{k+1} - \lambda_k | < 10^{-15}$ and 1000 maximum iterations. For the characterization of the convergence rates, we plot the norms of the 
residuals, $r_k$, given for the base methods by by 
$r_k = \A x_k^{m-1} - \lambda_k x_k$, and for the extrapolated methods by
$r_k = \A (x_k^\gamma)^{m-1} - \lambda_k x_k^\gamma,$
where $(\lambda_k,x_k)$, and respectively $(\lambda_k, x_k^\gamma)$, 
are the eigenpair approximations in a given iteration for each method.
\subsection{Example 1} \cite[Example 3.6]{Kolda_Mayo_2011} Let $\A \in \R^{[3,3]}$ be a symmetric odd order tensor defined by 
\begin{align*} 
a_{111} &= -  0.1281,  & a_{112} &=  0.0516, & a_{113} &= -  0.0954,  & a_{122} &= - 0.1958, \\
a_{123} &= - 0.1790, &  a_{133} &= -  0.2676, & a_{222} &= 0.3251, & a_{223} &= 0.2513, \\
a_{233} &= 0.1773, & a_{333} &=  0.0338.  
\end{align*} 
The complete list of eigenvalues 7 distinct eigenvalues for this tensor is given in 
\cite[table 3.2]{Kolda_Mayo_2011}. As in remark \ref{rem:tenseig}, 
$(\lambda,x)$ and $(-\lambda,-x)$
are not considered distinct eigenpairs for odd order tensors. 

Figure \ref{fig:odd res conv} displays the results from running the S-SHOPM 
algorithm \ref{alg:sshopm} and ES-SHOPM algorithm \ref{alg:es-shopm} 
in the convex case. 
In each image, we have displayed the results for a few shift parameters. 
In each case for ES-SHOPM, we chose the optimal value of parameter $\gamma$ 
as given by $\gamma_{opt}$ in \eqref{eqn:gammaopt} of theorem \ref{thm:acc}.
For the results shown within each plot, each of the methods was run from the same
starting point and converged to the same eigenvalue.  
For $\lambda = 0.8730$, the starting vector was $[-0.402911, 0.903051, -0.148865]$; 
 and for $\lambda = 0.0180$, the starting vector was $[0.638048, 0.45726, -0.619523]$. 
 The results for the concave case (not shown) look similar. 

\begin{figure}[h]
    \centering
    \includegraphics[width = 0.45\textwidth]{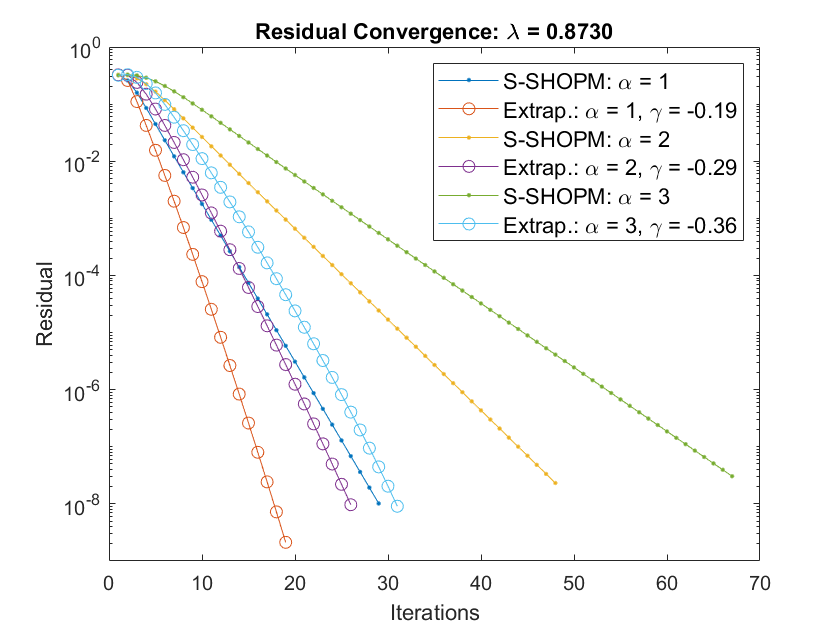}
    \includegraphics[width = 0.45\textwidth]{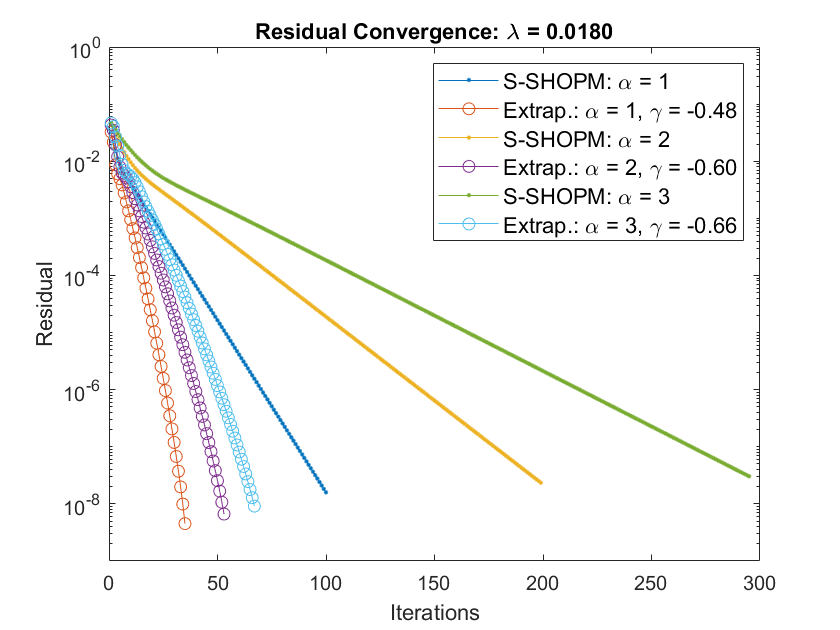}
    \caption{Example 1, residual convergence for the S-SHOPM and ES-SHOPM in the 
convex case for various $\alpha$ and corresponding $\gamma_{opt}$ as given by 
\eqref{eqn:gammaopt}: $\lambda = 0.8730$ (left) and $\lambda = 0.0180$ (right).}
\label{fig:odd res conv}
\end{figure}

Next we compare results using the dynamic extrapolation parameter 
and adaptive shift algorithms GEAP (adaptive shift), 
DES-SHOPM (dynamic extrapolation, static shift) and DE-GEAP (dynamic extrapolation,
adaptive shift) from Section \ref{sec:dynamic}. 
The results in Figure \ref{fig:de odd results} demonstrate that the dynamic methods 
converge faster 
for both convex and concave cases. 
The initial vector used for the convex case is 
$[-0.402911, 0.903051, -0.148865]$ and for the 
concave case $[-0.627312 0.38184 -0.678732]$. 
As predicted by the theory, the extrapolated algorithms with either dynamic or optimal
parameter selection converge at a better rate than their corresponding base iterations
(without acceleration); and, the fastest converging method in both cases is the DE-GEAP
using adaptive shift $\alpha_k$ and dynamically chosen $\gamma_k$. 

\begin{figure}[h]
    \centering
    \includegraphics[width = 0.45 \textwidth]{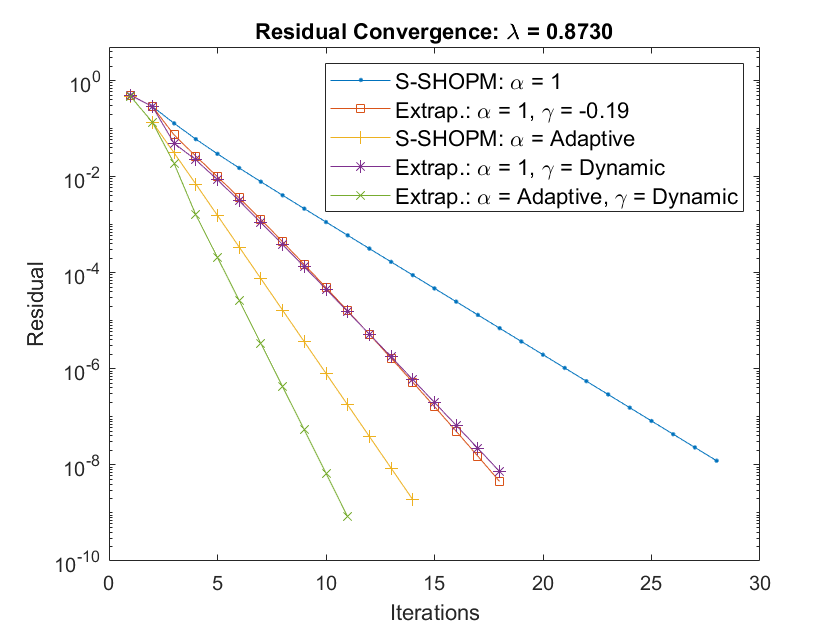}
    \includegraphics[width = 0.45 \textwidth]{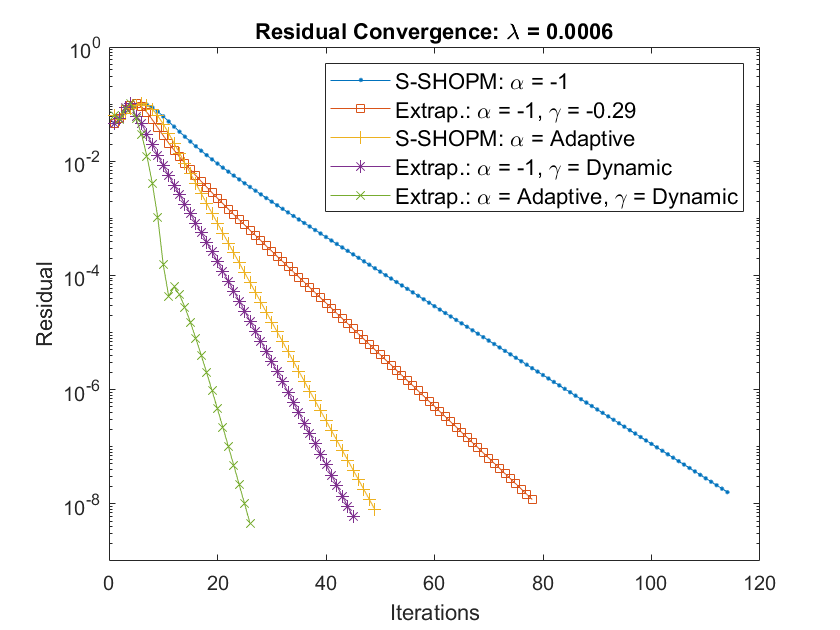}
    \caption{For Example 1, comparing residual convergence using dynamic parameter selection  : convex (left) and concave (right) case}
    \label{fig:de odd results}
\end{figure}
We also ran 1000 trials of the same experiment with randomly chosen initial vectors to observe the overall convergence patterns. For each trial, all the methods were run for the same starting vector. The results for those are summarized in table \ref{tbl: odd result compare} and table \ref{tbl: odd result compare cave}. The median iterations of the extrapolation methods are substantially less than the algorithm they're applied to. 
In particular, the dynamic extrapolation with both S-SHOPM and GEAP outperforms the original methods, respectively. Overall, DE-GEAP converges with the fewest iterations. 
We also observe that S-SHOPM and ES-SHOPM appear to have the same basins of 
attraction, i.e. they converge to the same eigenvalues for the same initial vector. 
However, for the dynamic methods, this is mostly but not strictly true as can be 
observed in the number of occurrences of each eigenvalue. 

\begin{table}[H]
    \centering
\begin{tabular}{|c||cc||cc||cc||cc||cc|}
\hline
    $\lambda$ &   \multicolumn{2}{c||}{S-SHOPM} &  \multicolumn{2}{c||}{ES-SHOPM} &  \multicolumn{2}{c||}{DES-SHOPM} &  \multicolumn{2}{c||}{GEAP} &  \multicolumn{2}{c|}{DE-GEAP}\\ 
\hline
     {} &  Its. &  \# Occ. & Its.& \# Occ. &  Its.& \# Occ.&  Its.& \# Occ.&  Its.& \# Occ.\\
\hline
    0.8730 &  29& 380 & 20& 380 & 18 & 381 & 13 & 378 & 11 & 392\\ 
\hline
    0.4306 & 47 & 300 & 24 & 300 & 25 & 299 & 24 & 300 & 16 & 303\\ 
\hline 
    0.0180 & 107&177&72&177 & 42 & 177 & 41 & 177 & 23 & 171\\
\hline
    -0.0006 & 135 & 143 & 92 & 143 & 48 & 143 & 17 & 145 & 13 & 134 \\
\hline
\end{tabular}
    \caption{Example 1, eigenvalue occurrences with $\alpha = 1$ (convex), $\gamma = -0.30$}
    \label{tbl: odd result compare}
\end{table}

\begin{table}[H]
    \centering
\begin{tabular}{|c||cc||cc||cc||cc||cc|}
\hline
    $\lambda$ &   \multicolumn{2}{c||}{S-SHOPM} &  \multicolumn{2}{c||}{ES-SHOPM} &  \multicolumn{2}{c||}{DES-SHOPM} &  \multicolumn{2}{c||}{GEAP} &  \multicolumn{2}{c|}{DE-GEAP}\\ 
\hline
     {} &  Its. &  \# Occ. & Its.& \# Occ. &  Its.& \# Occ.&  Its.& \# Occ.&  Its.& \# Occ.\\
\hline
    -0.8730 &  29 & 357 & 27 & 357 & 18 & 357 & 13 & 356 & 10 & 365 \\ 
\hline
    -0.4306 & 47 & 317 & 31 & 317 & 25 & 317 & 24 & 317 & 16 & 319 \\ 
\hline 
    -0.0180 & 107 & 180 & 36 & 180 & 41 & 180 & 41 & 180 & 22 & 175 \\
\hline
    0.0006 & 134 & 146 & 52 & 146 & 48 & 146 & 17 & 147 & 13 & 141 \\
\hline
\end{tabular}
    \caption{Example 1, eigenvalue occurrence with $\alpha = -1$ (concave), $\gamma = -0.50$}
    \label{tbl: odd result compare cave}
\end{table}

\subsection{Example 2} 
\cite[Example 1]{Kofidis_Regalia_2002} \cite[Example 3.5]{Kolda_Mayo_2011} Let $\A \in \R^{[4,3]}$ be a symmetric even order tensor defined by 
\begin{align*} 
a_{1111} &= 0.2883,  & a_{1112} &=  -0.0031, & a_{1113} &= 0.1973,  & a_{1122} &= - 0.2485, \\
a_{1123} &= - 0.2939, &  a_{1133} &=  0.3847, & a_{1222} &= 0.2972, & a_{1223} &= 0.1862, \\
a_{1233} &=  0.0919, &  a_{1333} &= - 0.3619, & a_{2222} &= 0.1241, & a_{2223} &= -0.3420, \\
a_{2233} &=  0.2127, &  a_{2333} &=  0.2727, & a_{3333} &= -0.3054. &  &
\end{align*} 

This tensor has 11 real eigenpairs and the complete list can be found in \cite[table 3.1]{Kolda_Mayo_2011}. The underlying function associated with this tensor 
is not convex, so the $\alpha$ selection is integral to convergence. 
Using the methodology as the previous example, we display an illustrative example of
residual convergence in figure \ref{fig:even res conv}, for both convex and concave cases,
Then show the number of occurrences of each eigenvalue with the median number of 
iterations for each method from 1000 different starting vectors in table
\ref{tbl: even results compare} for the convex case and table 
\ref{tbl: even results compare cave} for the concave case.

For each plot in figure \ref{fig:even res conv} we chose the same same starting point 
and the methods all converged to the same eigenpair. 
For $\lambda = 0.8893$, the starting vector was $[0.00106864, -0.0655103, -0.997851]$; 
 and for $\lambda = -1.0954$, we used $[0.10571, 0.977667, -0.18164]$.

\begin{figure}[h]
    \centering
    \includegraphics[width = 0.45 \textwidth]{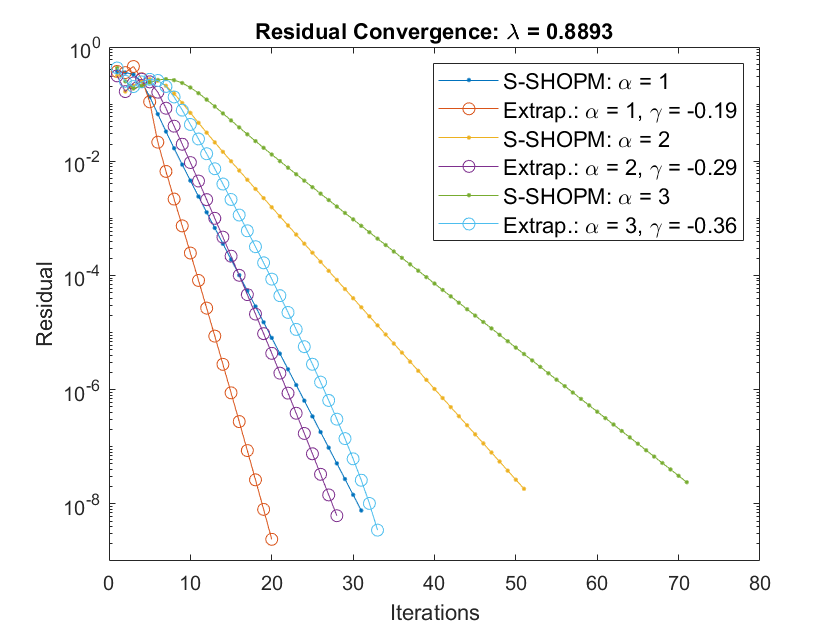}
    \includegraphics[width = 0.45 \textwidth]{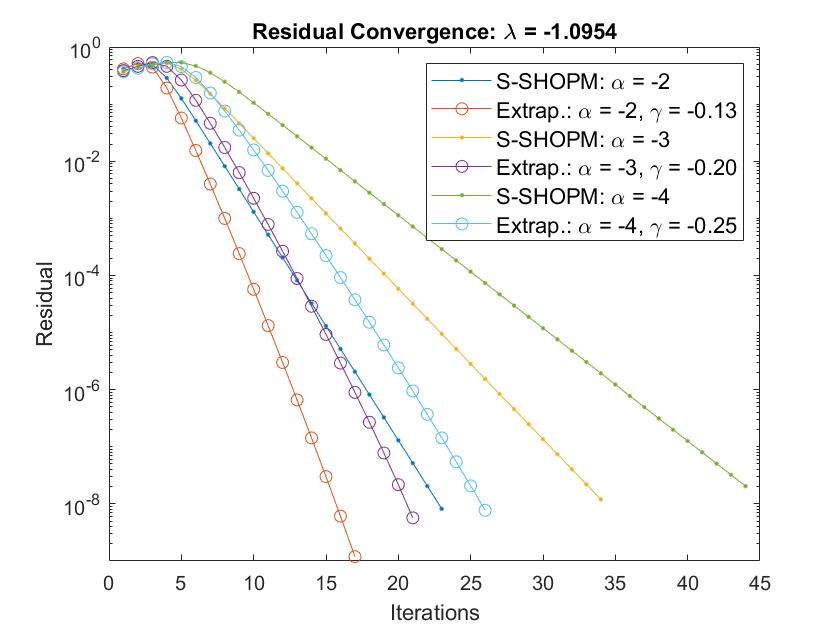}
    \caption{Example 2, residual convergence for the S-SHOPM and ES-SHOPM for various $\alpha$ and corresponding $\gamma_{opt}$ given by \eqref{eqn:gammaopt}: for the convex case, $\lambda = 0.8893$ (left) and for the concave case, $\lambda = -1.0954$ (right).}
    \label{fig:even res conv}
\end{figure}

For the dynamic parameter selection algorithm, the results are presented in figure 
\ref{fig:de even results} for the convex case. In this example we illustrate the 
performance of the algorithms on starting vectors close and far from saddle points
(which none of the algorithms will converge to). Both images show convergence to the 
same eigenpair. The right plot in figure \ref{fig:de even results} 
uses a starting vector close to a saddle point, whereas the left plot of  
figure \ref{fig:de even results} the starting vector is away from a saddle point. 
For the image on the right, the starting point is 
$[ 0.339331, -0.78868, 0.512677]$, which is close to the eigenvector, 
$[0.3598, -0.7780, 0.5150]$, associated with an unstable eigenvalue, $\lambda = 0.5105$. 
For the image on the left, the starting point is  $[0.00106864, -0.0655103, -0.997851]$, 
which is not close to any saddle point eigenvectors. 
For the plot on the right, the preasymptotic regime of the methods is prolonged, however
all methods do eventually achieve expected asymptotic rates. 
Here, the methods with dynamically chosen extrapolation parameters are seen both to 
achieve better convergence rates and to reduce the preasymptotic iterations in 
comparison to their base counterparts.  

\begin{figure}[H]
    \centering
    \includegraphics[width = 0.45 \textwidth]{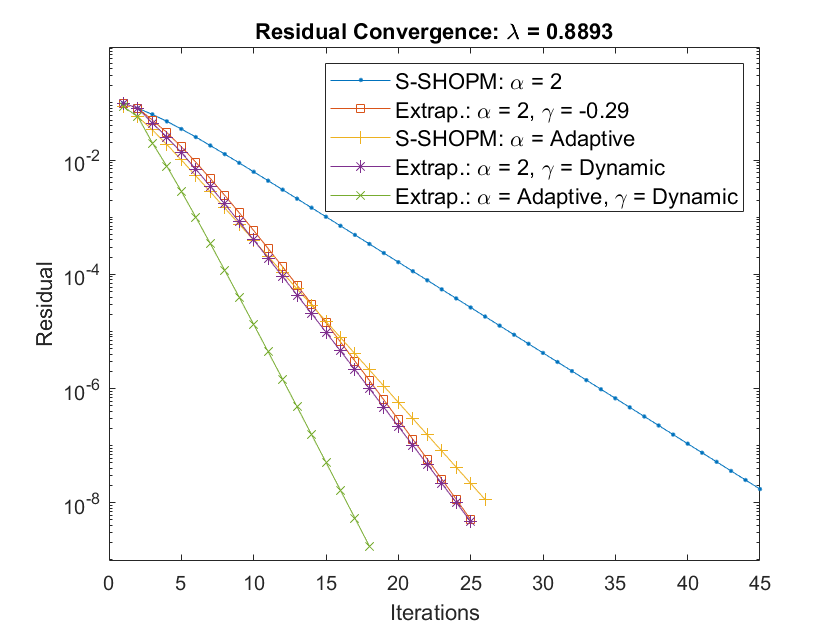}
    \includegraphics[width = 0.45 \textwidth]{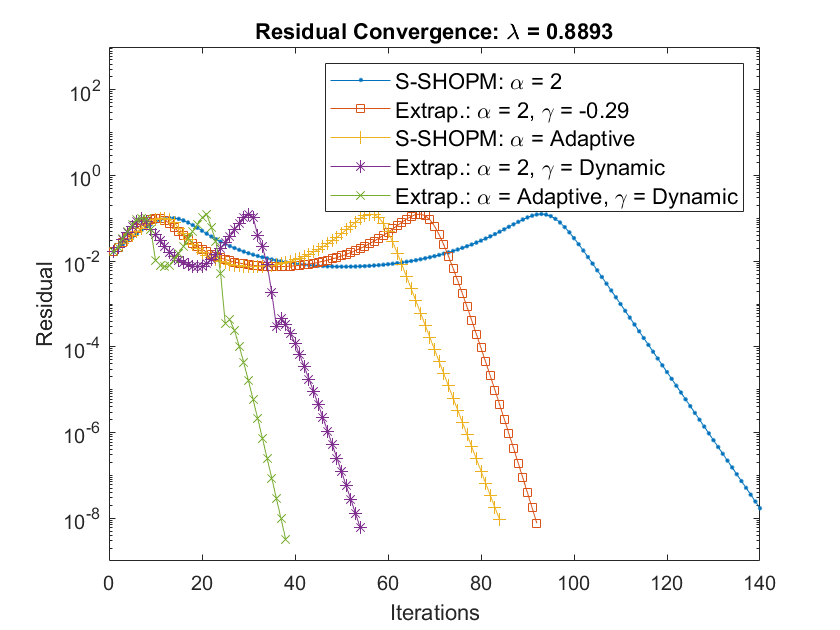}
    \caption{For Example 2, comparing residual convergence using dynamic extrapolation for the convex case with $\lambda = 0.8893$ : initial vector away from saddle point (left) and initial vector close to a saddle point (right) }
    \label{fig:de even results}
\end{figure}
We next ran 1000 trials from different starting vectors, the results of which are 
summarized in table \ref{tbl: even results compare} for the convex case,  and 
table \ref{tbl: even results compare cave} for the concave case.
 As before, we see the same convergent spectra for all the methods. 
In agreement with the theory the accelerated methods outperform the original methods, 
with the dynamic extrapolations performing the best overall. Just as in example 1, 
the basins of attraction for the various eigenpairs are not entirely the same under
adaptive shifts, although
the difference is quite small. 
\begin{table}[h]
    \centering
\begin{tabular}{|c||cc||cc||cc||cc||cc|}
\hline
    $\lambda$ &   \multicolumn{2}{c||}{S-SHOPM} &  \multicolumn{2}{c||}{ES-SHOPM} &  \multicolumn{2}{c||}{DES-SHOPM} &  \multicolumn{2}{c||}{GEAP} &  \multicolumn{2}{c|}{DE-GEAP}\\ 
\hline
     {} &  Its. &  \# Occ. & Its.& \# Occ. &  Its.& \# Occ.&  Its.& \# Occ.&  Its.& \# Occ.\\
\hline
    0.8893 & 52 & 498 & 29 & 498 & 26 & 498 & 32 & 498 & 20 & 498\\ 
\hline
    0.8169 & 45 & 303 & 26 & 303  & 24 & 303 & 34 & 302 & 20 & 304\\ 
\hline 
    0.3633 & 59 & 199 & 26 & 199 & 28 & 199 & 25 & 200 & 17 & 198\\
\hline
\end{tabular}
    \caption{For Example 2, eigenvalue occurrence with $\alpha = 2$ (convex), $\gamma = -0.35$}
    \label{tbl: even results compare}
\end{table}

\begin{table}[h]
    \centering
\begin{tabular}{|c||cc||cc||cc||cc||cc|}
\hline
    $\lambda$ &   \multicolumn{2}{c||}{S-SHOPM} &  \multicolumn{2}{c||}{ES-SHOPM} &  \multicolumn{2}{c||}{DES-SHOPM} &  \multicolumn{2}{c||}{GEAP} &  \multicolumn{2}{c|}{DE-GEAP}\\ 
\hline
     {} &  Its. &  \# Occ. & Its.& \# Occ. &  Its.& \# Occ.&  Its.& \# Occ.&  Its.& \# Occ.\\
\hline
    -0.0451 & 34 & 259 & 24 & 259 & 20 & 259 & 18 & 260 & 13 & 260 \\ 
\hline
    -0.5629 & 19 & 329 & 15 & 329 & 14 & 329 & 17 & 329 & 13 & 329 \\ 
\hline 
   -1.0954 & 20 & 412 & 15 & 412 & 15 & 412 & 17 & 411 & 13 & 411\\
\hline
\end{tabular}
    \caption{For Example 2, eigenvalue occurrence with $\alpha = -2$ (concave), $\gamma = -0.20$}
    \label{tbl: even results compare cave}
\end{table}

\subsection{Demonstrating rates of convergence}\label{subsec:demorates}
In figure \ref{fig:rate of conv}, we numerically verify that the residual rate of 
convergence for ES-SHOPM coincides with the spectral radius of $J_\gamma$ given by 
\eqref{eqn:specJgam} of theorem \ref{thm:acc}. For this numerical experiment, 
we chose a fixed starting point and shift $\alpha$ to run the ES-SHOPM algorithm
\ref{alg:es-shopm} for different values of $\gamma$. The initial vector for example 1 
(left) with $\lambda = 0.8730$ and $\alpha = 1$ is $[-0.402911, 0.903051, -0.148865] $. 
The initial vector for example 2 (right) with $\lambda = 0.3633$ and $\alpha = 2$ 
is $[0.357378, 0.670958, 0.649689] $.
We also display the expected rate of convergence by calculating the $\rho(J_\gamma)$ 
for each $\gamma$. From \cite{Kolda_Mayo_2011}, the rate of convergence of 
the S-SHOPM agrees with $\rho (J)$, where $J$ is the Jacobian from equation 
\eqref{eqn:sshopm jacobian}. Our formulation for $\rho (J_\gamma)$ maintains the
S-SHOPM rate for  $\gamma = 0$. 
Moreover, we can see from figure that for fixed values of $\alpha$ and $\lambda$ 
the algorithm attains the expected value of convergence for various choices of 
$\gamma \in [\gamma_{opt},0]$.  We do not show results for $\gamma < \gamma_{opt}$ 
because as shown in theorem \ref{thm:acc}, the eigenvalues of the Jacobian are
complex, and the convergence becomes oscillatory in that regime.

\begin{figure}[h]
    \centering
    \includegraphics[width = 0.45 \textwidth]{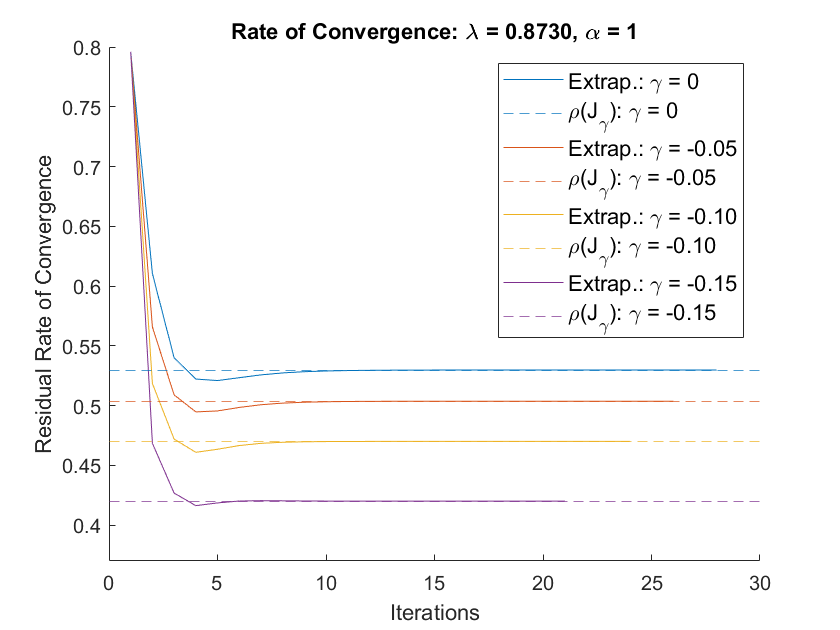}
    \includegraphics[width = 0.45 \textwidth]{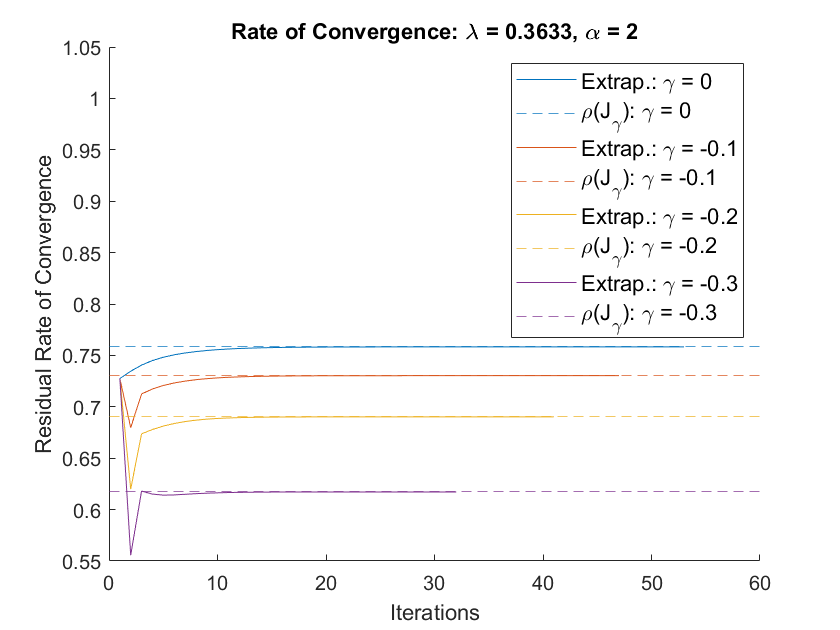}
    \caption{Rates of convergence and their estimates : for example 1 (left) and 
example 2 (right).}
    \label{fig:rate of conv}
\end{figure}

In figure \ref{fig:gamma dependence}, we verify the results of theorem \ref{thm:acc}
with for example 1 (left) and example 2 (right). Here, the computed 
spectral radius $\rho_\gamma$ of the augmented Jacobian is plotted against $\gamma$ 
for three different shift values.  The value of $\rho_\gamma$ at $\gamma = 0$ 
corresponds to the $\rho$ the spectral radius of the S-SHOPM Jacobian at the solution.
In each plot the minimizer $\gamma_{opt}$ agrees with the result of theorem \ref{thm:acc}.
To the left of the minimizer, $\gamma < \gamma_{opt}$ the curve for $\rho_\gamma$ 
agrees with $\sqrt{(-\gamma \rho)}$ and to the right, the curve agrees with 
$\rho_\gamma$ as given by \eqref{eqn:specJgam}.
\begin{figure}[h]
    \centering
    \includegraphics[width = 0.45 \textwidth]{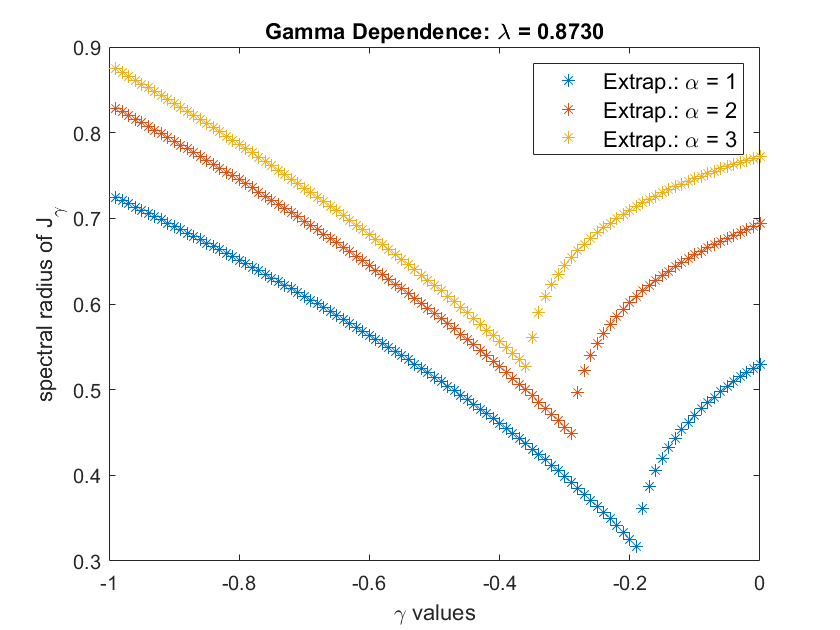}
    \includegraphics[width = 0.45 \textwidth]{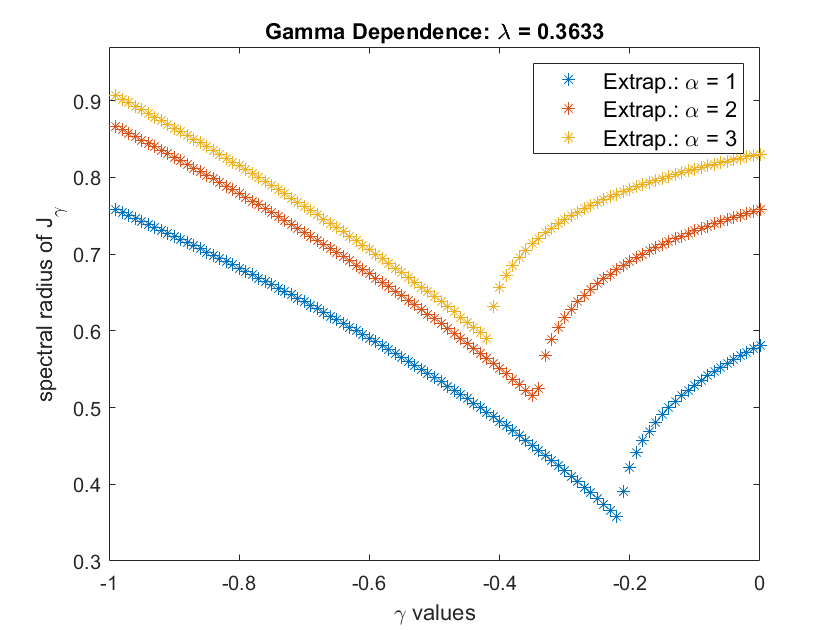}
    \caption{Spectral radius $\rho(J_\gamma)$ as a function of $\gamma$ : 
    example 1 (left) and example 2 (right). }
    \label{fig:gamma dependence}
\end{figure}

\subsection{Example 3: Dolphins Matrix}
In order to test the efficiency of our methods on a larger problem, we ran experiments 
similar to examples 1 and 2 on a larger example. 
As in \cite{Cipolla_Redivo-Zaglia_Tudisco_2020_lp_extrap} 
we constructed a tensor $\A \in \R^{[3,62]}$ 
by capturing the 3 cycles of the dolphins matrix from \cite{dolphins}, 
an undirected sparse adjacency matrix of size $62 \times 62$.

In Figure \ref{fig:dolphins results}, we compare the results of S-SHOPM and ES-SHOPM for 
different shift parameters run with their corresponding $\gamma_{opt}$ of 
\eqref{eqn:gammaopt}, 
for convex (left) and concave (right) cases. As before, the ES-SHOPM performs 
significantly better than the S-SHOPM. In Figure \ref{fig:de dolphins}, we display 
the results for the dynamic parameter selection for two instances, one convex (left) 
and one concave (right). Here there isn't a lot of difference in the performance of the 
DES-SHOPM (dynamic extrapolation, fixed shift) and the DE-GEAP (dynamic extrapolation,
adaptive shift). 
However, we do see a significant performance gain between the results run and without
extrapolation.
\begin{figure}[h]
    \centering
    \includegraphics[width = 0.45 \textwidth]{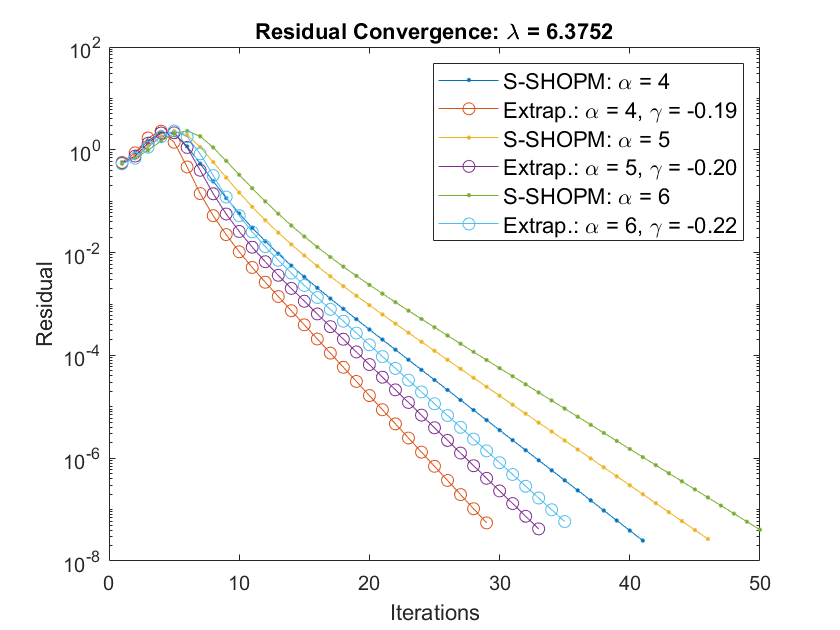}
    \includegraphics[width = 0.45 \textwidth]{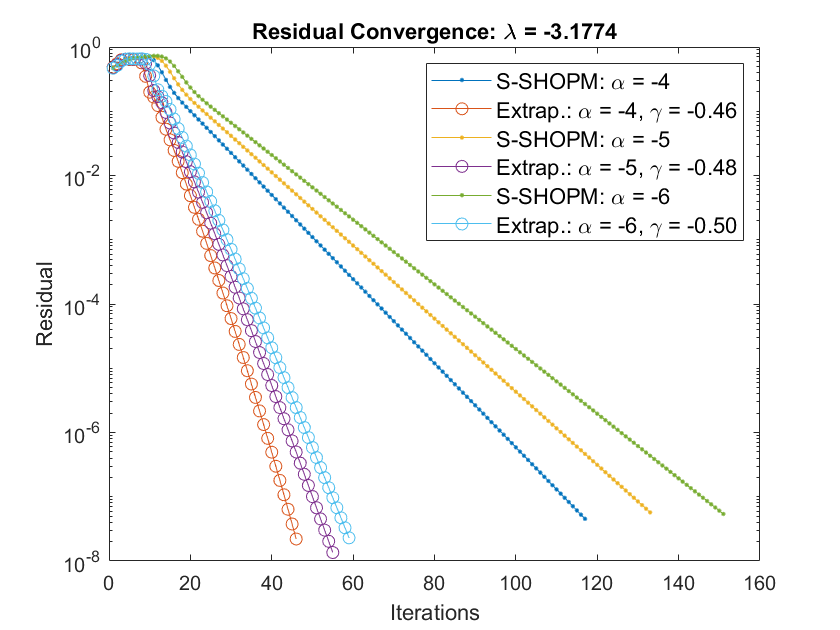}
    \caption{For dolphins matrix, residual convergence for S-SHOPM and ES-SHOPM for various $\alpha$ and corresponding $\gamma_{opt}$ : for the convex,  (left) and concave (right) cases }
    \label{fig:dolphins results}
\end{figure}
\begin{figure}[h]
    \centering
    \includegraphics[width = 0.45 \textwidth]{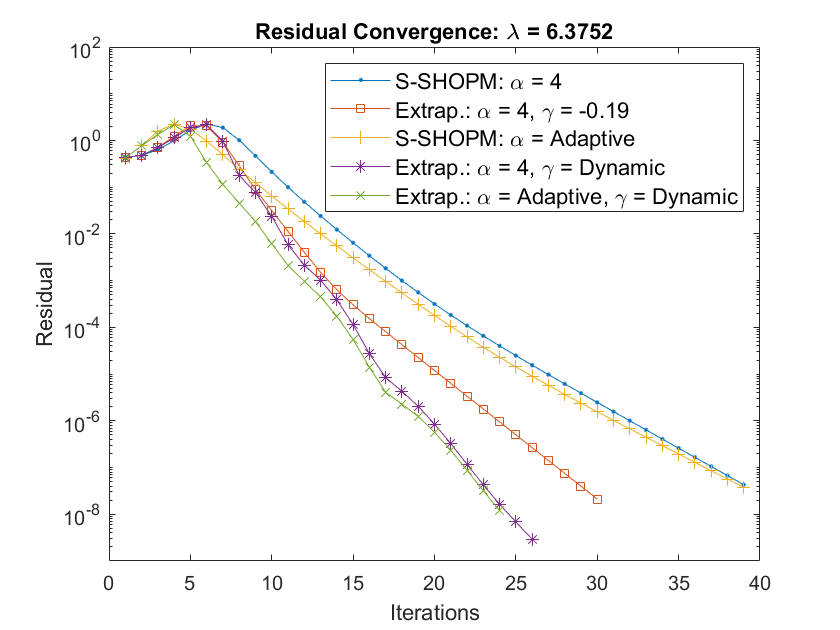}
    \includegraphics[width = 0.45 \textwidth]{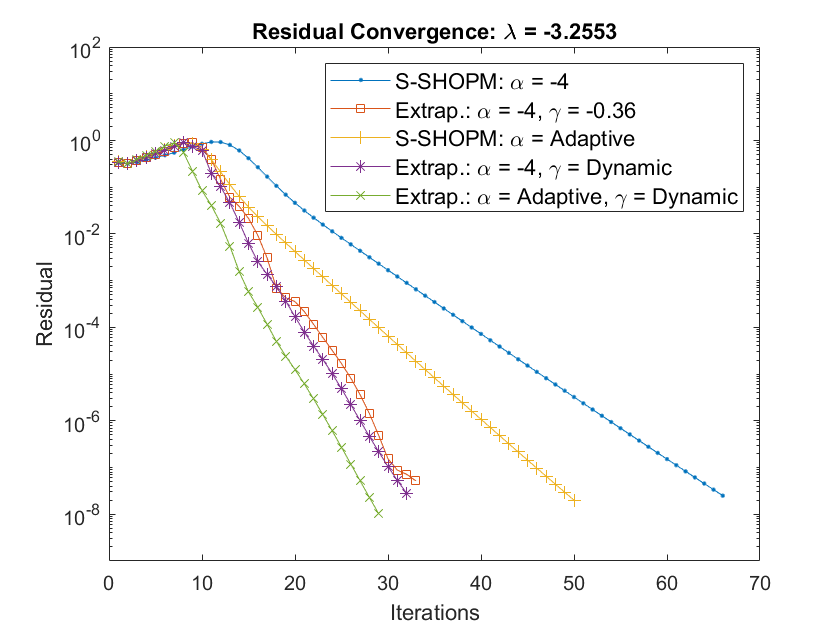}
    \caption{Example 3, comparing residual convergence using dynamic extrapolation: 
     for the convex  (left) and concave (right) cases.}
    \label{fig:de dolphins}
\end{figure}

\section{Conclusion}\label{sec:conclusion}
In this paper we introduced ES-SHOPM, an extrapolation algorithm to accelerate 
convergence of S-SHOPM for finding $Z$-eigenpairs of symmetric tensors.  We provided an 
analysis of the method which provides a range of extrapolation parameters for 
which ES-SHOPM provably converges at a better asymptotic rate than S-SHOPM for a 
given shift sufficient for convergence. Further, we 
derived the parameter which provides the optimal convergence rate for an extrapolation 
method of the form studied. We then introduced an automated algorithm suitable for 
either the statically shifted S-SHOPM algorithm or the adaptively shifted GEAP algorithm,
to dynamically approximate the optimal parameter.  We showed in numerical tests
that the introduced extrapolation algorithms accelerated convergence as expected 
in both statically and dynamically shifted cases. We also showed in the statically 
shifted cases that the expected rates of convergence were achieved.

In future work we will study additional extrapolation algorithms including momentum 
methods and Anderson acceleration, the latter of which has been shown in preliminary
numerical tests to accelerate convergence to unstable as well as stable
eigenvalues.  Future work may also encompass more general classes of tensor
eigenvalue problems as well as global convergence properties, including the observed 
behavior of dynamically chosen extrapolation parameters 
significantly reducing the number of preasymptotic iterations when the method is
started near an eigenvector of an unstable eigenvalue.

\section{Acknowledgements}
Author SP acknowledges partial support from NSF grant DMS 2045059.
\bibliographystyle{plain}
\bibliography{Citation}

\begin{thebibliography}{10}

\bibitem{AGHKT14}
A.~Anandkumar, R.~Ge, D.~Hsu, S.~M. Kakade, and Matus Telgarsky.
\newblock Tensor decompositions for learning latent variable models.
\newblock {\em J. Mach. Learn. Res.}, 15(1):2773–2832, 2014.

\bibitem{TensorToolbox}
B.~W. Bader, T.~G. Kolda, et~al.
\newblock Tensor {T}oolbox for {MATLAB}.

\bibitem{benson2019}
A.~R. Benson.
\newblock Three hypergraph eigenvector centralities.
\newblock {\em SIAM Journal on Mathematics of Data Science}, 1(2):293--312,
  2019.

\bibitem{BeGlLe15}
A.~R. Benson, D.~F. Gleich, and J.~Leskovec.
\newblock {\em Tensor Spectral Clustering for Partitioning Higher-order Network
  Structures}, pages 118--126.
\newblock SIAM, 2015.

\bibitem{CaSt13}
Dustin Cartwright and Bernd Sturmfels.
\newblock The number of eigenvalues of a tensor.
\newblock {\em Linear Algebra and its Applications}, 438(2):942--952, 2013.

\bibitem{CQZ13}
K.~Chang, L.~Qi, and T.~Zhang.
\newblock A survey on the spectral theory of nonnegative tensors.
\newblock {\em Numerical Linear Algebra with Applications}, 20(6):891--912,
  2013.

\bibitem{Cipolla_Redivo-Zaglia_Tudisco_2020_lp_extrap}
S.~Cipolla, M.~Redivo-Zaglia, and F.~Tudisco.
\newblock Shifted and extrapolated power methods for tensor
  $\ell^p$-eigenpairs.
\newblock {\em ETNA - Electronic Transactions on Numerical Analysis},
  53:1–27, 2020.

\bibitem{CRZT20p}
S.~Cipolla, M.~Redivo‐Zaglia, and F.~Tudisco.
\newblock Extrapolation methods for fixed‐point multilinear {P}age{R}ank
  computations.
\newblock {\em Numerical Linear Algebra with Applications}, 27(2), 2020.

\bibitem{dolphins}
Timothy~A. Davis and Yifan Hu.
\newblock The {U}niversity of {F}lorida sparse matrix collection.
\newblock {\em ACM Trans. Math. Softw.}, 38(1), dec 2011.

\bibitem{LMV00}
L.~De~Lathauwer, B.~De~Moor, and J.~Vandewalle.
\newblock On the best rank-1 and rank-{(R1 ,R2 ,. . .,RN)} approximation of
  higher-order tensors.
\newblock {\em SIAM Journal on Matrix Analysis and Applications},
  21(4):1324--1342, 2000.

\bibitem{erdogan2009}
A.~T. Erdogan.
\newblock On the convergence of ica algorithms with symmetric
  orthogonalization.
\newblock {\em IEEE Transactions on Signal Processing}, 57(6):2209--2221, 2009.

\bibitem{GoDe09}
A.~Ghosh and R.~Deriche.
\newblock From second to higher order tensors in diffusion-{MRI}.
\newblock {\em Tensors in Image Processing and Computer Vision}, 05 2009.

\bibitem{GLY15}
D.~F. Gleich, L-H Lim, and Y.~Yu.
\newblock Multilinear {P}age{R}ank.
\newblock {\em SIAM Journal on Matrix Analysis and Applications},
  36(4):1507--1541, 2015.

\bibitem{HQZ16}
S.~Hu, L.~Qi, and G.~Zhang.
\newblock Computing the geometric measure of entanglement of multipartite pure
  states by means of non-negative tensors.
\newblock {\em Phys. Rev. A}, 93:012304, Jan 2016.

\bibitem{Kofidis_Regalia_2002}
E.~Kofidis and P.~A. Regalia.
\newblock On the best rank-1 approximation of higher-order supersymmetric
  tensors.
\newblock {\em SIAM Journal on Matrix Analysis and Applications},
  23(3):863–884, 2002.

\bibitem{Kolda_Mayo_2011}
T.~G. Kolda and J.~R. Mayo.
\newblock Shifted power method for computing tensor eigenpairs.
\newblock {\em SIAM Journal on Matrix Analysis and Applications},
  32(4):1095–1124, 2011.

\bibitem{Kolda_Mayo_2014}
T.~G. Kolda and J.~R. Mayo.
\newblock An adaptive shifted power method for computing generalized tensor
  eigenpairs.
\newblock {\em SIAM Journal on Matrix Analysis and Applications},
  35(4):1563–1581, 2014.

\bibitem{LQY13}
G.~Li, L.~Qi, and G.~Yu.
\newblock The {Z}-eigenvalues of a symmetric tensor and its application to
  spectral hypergraph theory.
\newblock {\em Numerical Linear Algebra with Applications}, 20(6):1001--1029,
  2013.

\bibitem{Lim_2005}
L.-H. Lim.
\newblock Singular values and eigenvalues of tensors: a variational approach.
\newblock In {\em 1st IEEE International Workshop on Computational Advances in
  Multi-Sensor Adaptive Processing, 2005.}, pages 129--132, 2005.

\bibitem{MYS20}
D.~Mitchell, N.~Ye, and H.~D. Sterck.
\newblock Nesterov acceleration of alternating least squares for canonical
  tensor decomposition: {M}omentum step size selection and restart mechanisms.
\newblock {\em Numerical Linear Algebra with Applications}, 27(4):e2297, 2020.

\bibitem{Nigam_Pollock_2021}
N.~Nigam and S.~Pollock.
\newblock A simple extrapolation method for clustered eigenvalues.
\newblock {\em Numerical Algorithms}, 89(1):115–143, 2021.

\bibitem{PoSc21}
S.~Pollock and L.~R. Scott.
\newblock Extrapolating the {A}rnoldi algorithm to improve eigenvector
  convergence.
\newblock {\em International Journal of Numerical Analysis and Modeling},
  18(5):712--721, 2021.

\bibitem{Qi_2005}
L.~Qi.
\newblock Eigenvalues of a real supersymmetric tensor.
\newblock {\em Journal of Symbolic Computation}, 40(6):1302–1324, 2005.

\bibitem{Qi_2007}
L.~Qi.
\newblock Eigenvalues and invariants of tensors.
\newblock {\em Journal of Mathematical Analysis and Applications},
  325(2):1363–1377, 2007.

\bibitem{QCC18}
L.~Qi, H.~Chen, and Y.~Chen.
\newblock {\em Tensor Eigenvalues and Their Applications}, volume~39 of {\em
  Advances in Mechanics and Mathematics}.
\newblock Springer, Singapore, 2018.

\bibitem{Qi_Luo_2017}
L.~Qi and Z.~Luo.
\newblock Spectral hypergraph theory via tensors.
\newblock {\em Tensor Analysis}, page 121–172, 2017.

\bibitem{QWW08}
L.~Qi, Y.~Wang, and E.~X. Wu.
\newblock D-eigenvalues of diffusion kurtosis tensors.
\newblock {\em Journal of Computational and Applied Mathematics}, 221(1):150 --
  157, 2008.

\bibitem{regalia2003}
P.~A Regalia and E.~Kofidis.
\newblock Monotonic convergence of fixed-point algorithms for ica.
\newblock {\em IEEE Transactions on Neural Networks}, 14(4):943--949, 2003.

\bibitem{Rheinboldt}
W.~C. Rheinboldt.
\newblock {\em Methods for Solving Systems of Nonlinear Equations}.
\newblock Society for Industrial and Applied Mathematics, Philaelphia, 1974.

\bibitem{SFGDFL13}
T.~Schultz, A.~Fuster, A.~Ghosh, R.~Deriche, L.~Florack, and L.-H. Lim.
\newblock Higher-order tensors in diffusion imaging.
\newblock In Carl-Fredrik Westin, Anna Vilanova, and Bernhard Burgeth, editors,
  {\em Visualization and Processing of Tensors and Higher Order Descriptors for
  Multi-Valued Data}, pages 129--161, Berlin, Heidelberg, 2014. Springer.

\bibitem{Schultz_Seidel_2008}
T.~Schultz and H.-P. Seidel.
\newblock Estimating crossing fibers: A tensor decomposition approach.
\newblock {\em IEEE Transactions on Visualization and Computer Graphics},
  14(6):1635–1642, 2008.

\bibitem{SeKa12}
I.~Sertcelik and O.~Kafadar.
\newblock Application of edge detection to potential field data using
  eigenvalue analysis of structure tensor.
\newblock {\em Journal of Applied Geophysics}, 84:86--94, 2012.

\bibitem{Sterck_2012}
H.~D. Sterck.
\newblock A nonlinear gmres optimization algorithm for canonical tensor
  decomposition.
\newblock {\em SIAM Journal on Scientific Computing}, 34(3):A1351–A1379,
  2012.

\bibitem{Stuart_1998}
A.~Stuart and A.~R. Humphries.
\newblock {\em Dynamical systems and numerical analysis}, volume~2.
\newblock Cambridge University Press, 1998.

\bibitem{SMHKN03}
Y.~Suzuki, H.~Matsuzawa, I.~L. Kwee, and T.~Nakada.
\newblock Absolute eigenvalue diffusion tensor analysis for human brain
  maturation.
\newblock {\em NMR in Biomedicine}, 16(5):257--260, 2003.

\bibitem{XiCh13}
J.~Xie and A.~Chang.
\newblock On the {Z}-eigenvalues of the signless {L}aplacian tensor for an even
  uniform hypergraph.
\newblock {\em Numerical Linear Algebra with Applications}, 20(6):1030--1045,
  2013.

\bibitem{Xiong_Liu_2020}
L.~Xiong and J.~Liu.
\newblock Z-eigenvalue inclusion theorem of tensors and the geometric measure
  of entanglement of multipartite pure states.
\newblock {\em Computational and Applied Mathematics}, 39(2):1–11, 2020.

\end{thebibliography}
\end{document}